\newtheorem{definition}{Definition}[section]
\newtheorem{lemma}[definition]{Lemma}
\newtheorem{theorem}[definition]{Theorem}
\newtheorem{corollary}[definition]{Corollary}
\newtheorem{proposition}[definition]{Proposition}
\newtheorem{proof}{Proof\hspace*{-2pt}}
\def\QED{\hspace*{\fill}$\square$}
\newif\ifDEBUG
\newcommand{\imply}{\Rightarrow}
\newcommand{\iffsmall}{\hspace*{-5pt}\Leftrightarrow\hspace*{-5pt}}
\newcommand{\cgpos}[2]{\combgame{\{#1 \,|\, #2\}}}
\newcommand{\Z}{\mathbb{Z}}
\newcommand{\ZZ}{\cgpos{\Z}{\Z}}
\newcommand{\D}{\mathbb{D}}
\newcommand{\DD}{\cgpos{\D}{\D}}
\newcommand{\LS}[1]{\mathrm{L}_{#1}}
\newcommand{\RS}[1]{\mathrm{R}_{#1}}
\newcommand{\gd}{\mathrm{gd}}
\newcommand{\Reduction}[9]{\mathrel{%
  {}_{#1}^{#2}%
  \raise#7\hbox{%
    \vtop{\ialign{##\crcr%
      \hfil\raise#8\hbox{#4}\hfil\crcr%
      \noalign{\nointerlineskip}%
      #9\crcr%
      \noalign{\nointerlineskip}%
      \hfil#3\hfil\crcr%
    }}%
  }%
  {}_{\scriptscriptstyle #5}^{#6}}}%
\newcommand{\move}[2]{%
  \Reduction{}{}%
    {\hspace{3pt}$\scriptscriptstyle{#1}$\hspace{6pt}}{\hspace{3pt}$\scriptstyle{#2}$\hspace{6pt}}%
    {}{}{5pt}{1pt}{\rightarrowfill}}
\newcommand{\lmove}[2]{%
  \Reduction{}{}%
    {\hspace{6pt}$\scriptscriptstyle{#1}$\hspace{3pt}}{\hspace{6pt}$\scriptstyle{#2}$\hspace{3pt}}%
    {}{}{5pt}{1pt}{\leftarrowfill}}
\begin{document}

\noindent
\begin{center}
{\LARGE \bf
  Various Diamond Properties\\[4pt]
  in Combinatorial Game Theory}\\[16pt]
{\large \bf
  KUSAKARI Keiichirou\footnotemark[1]\hspace{20pt}
  ABUKU Tomoaki\footnotemark[1]}\\[8pt]
{\large December 25, 2025}
\end{center}
\vspace*{4pt}
\footnotetext[1]{Gifu University}


\begin{center}
\begin{minipage}{0.8\textwidth}
\textbf{SUMMARY}~~
We investigate conditions under which positions in combinatorial games admit simple values.
We introduce a unified diamond framework, the $\Diamond_\mathcal{A}$-property ($\mathcal{A}\in\{\Z,\D\}$), for sets of positions closed under options.
Under certain conditions, this framework guarantees that all values are integers,
dyadic rationals, or pairs $\cgpos{m}{n}$ (on $\Z$ or $\D$).
As an application, we establish that every position in \textsc{Yashima}
game on bipartite graphs has an integer pair value.
\end{minipage}
\end{center}


\section{Introduction}

A combinatorial game is a two-player game
with no chance elements and no hidden information,
and the collection of results concerning such games forms the field of
combinatorial game theory \cite{ANW19,S13}.
In this theory, each game position is assigned a position value,
and these values can be handled algebraically
to support various kinds of analysis.
Position values include the set $\D$ of numbers (the dyadic rationals),
as well as non-numeric values such as
$\cgstar$ $(\cong \cgpos{0}{0})$ and $\cgup$ $(\cong \cgpos{0}{\cgstar})$.
However, since the game tree often grows very fast,
the analysis tends to be hard and the resulting values can become quite complicated.

As an example, we introduce \textsc{Yashima} game \cite{A90}.
At the beginning, each of the two players places one token on a vertex of an undirected graph.
Following the usual convention, the players are referred to as Left and Right.
On each turn, a player moves their token along an edge to an adjacent vertex
and deletes the edge just traversed.
The players then continue alternately in this manner.
The two tokens may never occupy the same vertex simultaneously.
The player who has no legal move loses.

An example of \textsc{Yashima} game is shown in Fig.\ref{fig:yashima1}.
In this figure, 
\textcircled{\raisebox{.2pt}{\scriptsize L}} and
\textcircled{\raisebox{.2pt}{\scriptsize R}}
denote the tokens of Left and Right, respectively. 
In this example, Right eventually has no legal move and therefore loses.
\begin{figure}[htb]
  \centering
\begin{tikzpicture}[x=12pt,y=12pt,thick]
\node at (0,0) {
\begin{tikzpicture}
\node (p01) at (0,2) [draw,circle,inner sep=2pt] {L};
\node (p11) at (2,2) [draw,circle,inner sep=5pt] {};
\node (p00) at (0,0) [draw,circle,inner sep=5pt] {};
\node (p10) at (2,0) [draw,circle,inner sep=2pt] {R};
\draw [double distance=2pt] (p00) to (p10);
\draw (p10) to (p11);
\draw (p00) to (p01);
\end{tikzpicture}};
\node at (6,0) {
\begin{tikzpicture}
\node (p01) at (0,2) [draw,circle,inner sep=5pt] {};
\node (p11) at (2,2) [draw,circle,inner sep=5pt] {};
\node (p00) at (0,0) [draw,circle,inner sep=2pt] {L};
\node (p10) at (2,0) [draw,circle,inner sep=2pt] {R};
\draw [double distance=2pt] (p00) to (p10);
\draw (p10) to (p11);
\end{tikzpicture}};
\node at (12,0) {
\begin{tikzpicture}
\node (p01) at (0,2) [draw,circle,inner sep=5pt] {};
\node (p11) at (2,2) [draw,circle,inner sep=2pt] {R};
\node (p00) at (0,0) [draw,circle,inner sep=2pt] {L};
\node (p10) at (2,0) [draw,circle,inner sep=5pt] {};
\draw [double distance=2pt] (p00) to (p10);
\end{tikzpicture}};
\node at (18,0) {
\begin{tikzpicture}
\node (p01) at (0,2) [draw,circle,inner sep=5pt] {};
\node (p11) at (2,2) [draw,circle,inner sep=2pt] {R};
\node (p00) at (0,0) [draw,circle,inner sep=5pt] {};
\node (p10) at (2,0) [draw,circle,inner sep=2pt] {L};
\draw (p00) to (p10);
\end{tikzpicture}};
\draw [->] (2.2,0) to (3.8,0); \node at (3,-.5) {\small L};
\draw [->] (8.2,0) to (9.8,0); \node at (9,-.5) {\small R};
\draw [->] (14.2,0) to (15.8,0); \node at (15,-.5) {\small L};
\end{tikzpicture}
\caption{\textsc{Yashima} game}
\label{fig:yashima1}
\end{figure}
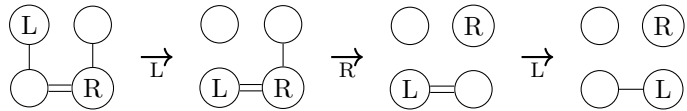

In our analysis of \textsc{Yashima} game,
we find that in many cases the position value takes a remarkably simple form,
namely an integer pair $\cgpos{m}{n}$.
Although this is not valid in full generality,
we discover that it holds for bipartite graphs.

As a key to this phenomenon,
we introduce the notion of the diamond property,
illustrated in Fig.\ref{fig:diamond}.
The name derives from the diamond-shaped diagram.
This property typically arises when the intended move is not blocked by the opponent, 
a situation that frequently occurs in the early stages of the game.
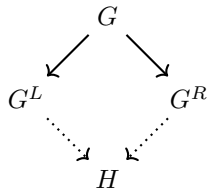
\begin{figure}[htb]
\centering
\begin{tikzpicture}[x=22pt,y=22pt,thick,->]
\node (o) at (-.8,0) {};
\node (g) at (1.4,2.8) {$G$};
\node (gl) at (0,1.4) {$G^L$};
\node (gr) at (2.8,1.4) {$G^R$};
\node (h) at (1.4,0) {$H$};
\draw (g) to (gl);
\draw (g) to (gr);
\draw [dotted] (gl) to (h);
\draw [dotted] (gr) to (h);
\end{tikzpicture}
  \caption{Diamond property}
  \label{fig:diamond}
\end{figure}

A related work considers a set of positions $\mathcal{G}$ that is closed under options.
For such a set, authors in \cite{CHNS21} proposed two properties:
(i) the $F_2$-property, which guarantees $\mathcal{G} \subset \Z$, and
(ii) the $F_1$-property, which guarantees $\mathcal{G} \subset \D$.
To maintain consistent terminology in this paper, we refer to
the $F_2$-property as the $\Diamond^{\le}$-property, and
the $F_1$-property as the $\Diamond_L^{\le}$ and $\Diamond_R^{\le}$-properties.
These results are very closely related to our property:
(iii) the $\Diamond$-property,
which guarantees $\mathcal{G} \subset \ZZ$ (the integer pairs);
in particular, the $F_2$-property subsumes the $\Diamond$-property as a special case.
These observations suggest the existence of a more general framework
that unifies all of them.

In Section~3, we introduce such a framework
called the $\Diamond_\mathcal{A}$-property for $\mathcal{A} \in \{\Z,\D\}$.
This provides a unified method guaranteeing that,
for any set $\mathcal{G}$ closed under options,
all positions in $\mathcal{G}$ have values that are integer pairs, integers, number pairs, or numbers.
In Section~4,
we demonstrate that both our $\Diamond$-based framework and the results
of \cite{CHNS21} emerge as particular instances of this approach.
We further present several variants that conform to the same structure.
Finally, Section~5 applies our results to \textsc{Yashima} game on bipartite graphs,
establishing that every position value is an integer pair.

\section{Preliminaries: Combinatorial Game Theory}

We assume basic familiarity with combinatorial game theory
and recall only the notions and results needed for this paper;
for background, see \cite{ANW19,S13}.
By convention, the two players are called Left and Right.

\subsection{Conway Algebra}

We introduce the Conway algebra,
which provides a theoretical basis for combinatorial game theory.

The underlying set $\mathbb{G}$ consists of \emph{positions}.
A \emph{position} $G$ ($\in \mathbb{G}$) is inductively defined as
an ordered pair $\cgpos{G^\mathcal{L}}{G^\mathcal{R}}$,
where $G^{\mathcal L}$ (resp.\ $G^{\mathcal R}$) is a finite set of positions
called the \emph{Left} (resp.\ \emph{Right}) \emph{options} of $G$.
We denote by $0$ the simplest position $\cgpos{}{}$.
When no confusion arises,
we use $G^{\mathcal L}$ and $G^{\mathcal R}$ to denote these option sets,
and we write $G^L$ (resp.\ $G^R$)
for a particular Left (resp.\ Right) option of $G$.
When Left (resp.\ Right) moves from $G$ to $G'$ in one move,
we write $G \move{L}{} G'$ (resp.\ $G \move{R}{} G'$),
or simply write $G \to G'$.
We say that two positions $G$ and $H$ are \emph{isomorphic},
written $G \cong H$, if their game trees coincide.

We partition the set of positions into
the four \emph{outcome classes} 
$\mathcal{N}$, $\mathcal{P}$, $\mathcal{L}$, and $\mathcal{R}$,
which are inductively defined as follows:
\begin{center}
\begin{tabular}{lcl}
  $G \in \mathcal{L}$ &$\iffsmall$&
    $\exists G^L, G^L \in \mathcal{L} \cup \mathcal{P}$
    and $\forall G^R, G^R \in \mathcal{L} \cup \mathcal{N}$ \\
  $G \in \mathcal{R}$ &$\iffsmall$&
    $\forall G^L, G^L \in \mathcal{R} \cup \mathcal{N}$
    and $\exists G^R, G^R \in \mathcal{R} \cup \mathcal{P}$ \\
  $G \in \mathcal{P}$ &$\iffsmall$&
    $\forall G^L, G^L \in \mathcal{R} \cup \mathcal{N}$
    and $\forall G^R, G^R \in \mathcal{L} \cup \mathcal{N}$ \\
  $G \in \mathcal{N}$ &$\iffsmall$&
    $\exists G^L, G^L \in \mathcal{L} \cup \mathcal{P}$
    and $\exists G^R, G^R \in \mathcal{R} \cup \mathcal{P}$
\end{tabular}
\end{center}
We denote by $o(G)$ the outcome class of $G$.
Intuitively,
$G \in \mathcal{L}$
(resp. $G \in \mathcal{R}$, $G \in \mathcal{P}$, and $G \in \mathcal{N}$)
means that
the Left (resp. Right, Previous, Next) player can win.


The negative $-G$ and the \emph{disjunctive sum} $G + H$ are inductively defined as
$\combgame{\{ -G^\mathcal{R} | -G^\mathcal{L} \}}$ and
$\combgame{\{ G^\mathcal{L}+H, G+H^\mathcal{L} | G^\mathcal{R}+H, G+H^\mathcal{R} \}}$.
The equivalence $G = H$ is defined as $\forall X, o(G+X) = o(H+X)$.
The algebra also has a partial order $\leq$ on positions
whose equivalence part ${\leq} \cap {\geq}$ is equal to $=$
and it satisfies the following properties:
\begin{center}
\begin{tabular}{lclclcl}
  $G > 0$ &$\iffsmall$& $G \in \mathcal{L}$
  &&
  $G < 0$ &$\iffsmall$& $G \in \mathcal{R}$ \\
  $G = 0$ &$\iffsmall$& $G \in \mathcal{P}$
  &&
  $G \cgfuzzy 0$ &$\iffsmall$& $G \in \mathcal{N}$
\end{tabular}
\end{center}
where $\cgfuzzy$ denotes the incomparability (``fuzzy'') relation.
We also use the derived relations $\geq$, $\leq$, $\cggfuz$, and $\cglfuz$
defined respectively as ${>} \cup {=}$, ${<} \cup {=}$, ${>} \cup {\cgfuzzy}$,
and ${<} \cup {\cgfuzzy}$.
We can extend these operations and relations to the quotient set $\mathbb{G}/{=}$:
$[G]+[H] = [G+H]$, $-[G] = [-G]$, and $[G] \leq [H] \Leftrightarrow G \leq H$.
The Conway algebra is the triple $\langle \mathbb{G}, +, \leq \rangle$,
whose quotient algebra $\langle \mathbb{G}/{=}, +, \leq \rangle$
is known to be an ordered Abelian group.
For convenience, 
the quotient algebra is also sometimes called the Conway algebra.

A Left option $G^L$ is said to be \emph{dominated}
if there exists another Left option $G^{L'}$ with $G^L \le G^{L'}$;
dually, a Right option $G^R$ is \emph{dominated}
if there exists $G^{R'}$ with $G^R \ge G^{R'}$.
It is known that
deleting a dominated option preserves equality, that is:
\[\hspace*{-10pt}\begin{array}{ll}
  \cgpos{G^\mathcal{L}}{G^\mathcal{R}}
    = \cgpos{G^\mathcal{L} \setminus \{G^L\}}{G^\mathcal{R}}
    & \text{if $G^L$ is dominated} \\
  \cgpos{G^\mathcal{L}}{G^\mathcal{R}}
    = \cgpos{G^\mathcal{L}}{G^\mathcal{R} \setminus \{G^R\}}
    & \text{if $G^R$ is dominated}
\end{array}\]
Another standard reduction is \emph{bypassing} \emph{reversible options},
which also preserves equality.
By repeatedly removing dominated options and bypassing reversible options,
one obtains the \emph{canonical form} of $G$.
Canonical forms are unique up to equality
and thus serve as canonical representative of each equality class.

The algebra $\mathbb{G}/{=}$ contains ordered Abelian subgroups
isomorphic to the integers and the dyadic rationals
with the usual addition and order.
We also denote by $\mathbb{Z}$ (resp. $\mathbb{D}$)
the set of the positions
in the equivalence classes corresponding to the integers
(resp. the dyadic rationals).
For a dyadic rational $x$,
we also write $x$ for the canonical form of its correspondence equivalence class.
The correspondence is as follows:
$0 \cong \cgpos{}{}$,
$m \cong \cgpos{m-1}{}$,
$-m \cong \cgpos{}{1-m}$, and
$\frac{n}{2^d} \cong \cgpos{\frac{n-1}{2^d}}{\frac{n+1}{2^d}}$,
where $m$ and $d$ are positive integers and $n$ is an odd integer.
In addition, there are many non-numeric positions, such as
$\cgstar \cong \cgpos{0}{0}$, $\cgup \cong \cgpos{0}{\cgstar}$,
and $\cgup\cgstar \cong \cgpos{0,\cgstar}{0}$.

We now introduce the notions of stops,
which give rise to one of the representative features.

\begin{definition}
Let $\mathcal{A} \in \{\Z,\D\}$.
For a position $G$, we define $\LS{\mathcal{A}}(G)$ and $\RS{\mathcal{A}}(G)$ as follows.
\begin{eqnarray*}
  \LS{\mathcal{A}}(G) &=& \left\{\begin{array}{ll}
    G & \mbox{if $G \in \mathcal{A}$} \\
    \max\{ \RS{\mathcal{A}}(G^L) : G^L \in G^\mathcal{L} \}
      & \mbox{otherwise}
  \end{array}\right.\\
  \RS{\mathcal{A}}(G) &=& \left\{\begin{array}{ll}
    G & \mbox{if $G \in \mathcal{A}$} \\
    \min\{ \LS{\mathcal{A}}(G^R) : G^R \in G^\mathcal{R} \}
      & \mbox{otherwise}
  \end{array}\right.
\end{eqnarray*}
We call $\LS{\D}(G)$ (resp.\ $\RS{\D}(G)$) the \emph{left} (resp.\ \emph{right}) \emph{stops},
and $\LS{\Z}(G)$ (resp.\ $\RS{\Z}(G)$) the \emph{integer left} (resp.\ \emph{right}) \emph{stops}.
\end{definition}
The stops are known to be invariant: that is,
if $G=H$ then
$\LS{\mathcal{A}}(G)=\LS{\mathcal{A}}(H)$
and $\RS{\mathcal{A}}(G)=\RS{\mathcal{A}}(H)$
for $\mathcal{A}\in\{\Z,\D\}$.
The stops also has the following proposition.

\begin{proposition}\label{Pro:stop}
Let $\mathcal{A}\in\{\Z,\D\}$, $G$ be a position, and $x\in \mathcal{A}$.
Then $\LS{\mathcal{A}}(G) < x \imply G < x$
and $\RS{\mathcal{A}}(G) > x \imply G > x$.
\end{proposition}

Although the case of $\mathcal{A} = \Z$ in the above proposition
is not directly shown in \cite{S13},
they can be shown in a similar way to the case of $\mathcal{A} = \D$.

We introduce two basic theorems in combinatorial game theory.

\begin{proposition}[Translation Theorem]\label{pro:trans}
Let $\mathcal{A} \in \{\Z,\D\}$, $x \in \mathcal{A}$, and $G \notin \mathcal{A}$.
Then $G+x = \cgpos{G^{\mathcal{L}}+x}{G^{\mathcal{R}}+x}$.
\end{proposition}

\begin{proposition}[Simplicity Theorem]\label{pro:simpl}
Let $\mathcal{A} \in \{\Z,\D\}$ and $G$ be a position.
If there exists $x \in \mathcal{A}$ such that
$G^{\mathcal{L}} \cglfuz x \cglfuz G^{\mathcal{R}}$,
then $G \in \mathcal{A}$.
More specifically,
$G$ is the simplest position among
$\{\,y \in \mathcal{A} :\; G^{\mathcal L} \cglfuz y \cglfuz G^{\mathcal R}\,\}$.
\end{proposition}

The notion ``simplest'' in the simplicity theorem
is defined in terms of \emph{birthday}.
Intuitively, the birthday of $G$ is the height of the game tree of $G$.

The following proposition follows from the simplicity theorem.

\begin{proposition}\label{pro:empty-int}
If $G^\mathcal{L} = \emptyset \lor G^\mathcal{R} = \emptyset$
then $G \in \mathbb{Z}$.
\end{proposition}

We introduce two technical propositions.

\begin{proposition}
$G^L \cglfuz G \cglfuz G^R$
for any $G$, $G^L$, and $G^R$.
\end{proposition}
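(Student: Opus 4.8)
The plan is to unwind $\cglfuz$ into a statement about winning strategies. Since for any positions $A,B$ exactly one of $A<B$, $A=B$, $A>B$, $A\cgfuzzy B$ holds and $\cglfuz$ denotes ${<}\cup{\cgfuzzy}$, the relation $A\cglfuz B$ is exactly the negation of $A\geq B$. So the proposition is equivalent to the two assertions $G^L\not\geq G$ and $G\not\geq G^R$, that is, $G\not\leq G^L$ and $G^R\not\leq G$. Recalling that $H\leq 0$ means precisely that Left, moving first in $H$, loses, it suffices to exhibit, for Left moving first, a winning strategy in $G-G^L$ and in $G^R-G$.

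Each strategy is a single move onto a zero position. In $G-G^L=G+(-G^L)$, Left moves in the first component $G$ to its option $G^L$, reaching $G^L+(-G^L)$, a position of value $0$; it is now Right's turn, the player to move in a position of value $0$ loses, so Left wins. In $G^R-G=G^R+(-G)$, Left moves in the second component $-G$ to $-G^R$ --- legal because $G^R$ is a Right option of $G$, hence $-G^R$ is a Left option of $-G$ --- again reaching $G^R+(-G^R)$ of value $0$ with Right to move, so Left wins. Therefore $G-G^L\not\leq 0$ and $G^R-G\not\leq 0$, as required.

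The only ingredient beyond bookkeeping is the standard identity $X+(-X)=0$ for every position $X$, which certifies that the position reached after Left's single move really is a previous-player win; with that in hand there is no genuine obstacle. The point that needs a little care is the reduction in the first paragraph: because $\cglfuz$ is the union ${<}\cup{\cgfuzzy}$, it is the complement of $\geq$, not of $>$, so each half of the statement does reduce to one of the two strategic claims --- and, conversely, $\cglfuz$ cannot be strengthened to $<$ here, since e.g.\ $0$ is a Left option of $\cgstar=\cgpos{0}{0}$ while $0\cgfuzzy\cgstar$.
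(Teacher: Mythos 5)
Your proof is correct. Note that the paper itself gives no proof of this proposition: it is recalled as standard background with a pointer to Siegel's book, so there is no in-paper argument to compare against. What you give is the standard textbook proof: using quadrichotomy to read $\cglfuz$ as the complement of $\geq$, translating $\neg(G\leq G^L)$ and $\neg(G^R\leq G)$ into ``Left wins moving first in $G-G^L$, resp.\ $G^R-G$,'' and then winning in one move by mirroring onto $G^L+(-G^L)=0$, resp.\ $G^R+(-G^R)=0$. The mechanics are right, including the slightly delicate points: the legality of Left's move $-G\to -G^R$, the use of $X+(-X)=0$ together with the ``$=0$ means previous player wins'' characterization, and the observation that the conclusion cannot be sharpened to $<$ (your $\cgstar=\cgpos{0}{0}$ example). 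The only caveat is that your argument leans on the disjunctive sum and negation machinery, which this paper never introduces (it works only with the order relations and their strategic characterizations); within the paper's ambient conventions that machinery is standard and implicitly available via the cited reference, so this is a matter of toolkit rather than a gap. An alternative that avoids sums entirely is to use the recursive characterization of $\geq$ (namely $G\geq H$ iff no Right option $G^R\leq H$ and no Left option $H^L\geq G$): then $G^L\geq G$ is immediately refuted by the witness $H^L:=G^L$ itself, since $G^L\geq G^L$, and dually for $G\geq G^R$; but your route is equally legitimate.
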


\begin{proposition}\label{pro:transitive}
For any positions $G,H,J$,
if $G \cglfuz H \leq J$ or $G \leq H \cglfuz J$,
then $G \cglfuz J$.
\end{proposition}

The proof of Proposition \ref{pro:transitive}
is not explicitly written in \cite{ANW19,S13},
so we will provide it here.

\begin{proof}
Assume that $G \geq J$.
Then we have $H \leq G$ or $J \leq H$
because $H \leq J \leq G$ or $J \leq G \leq H$, respectively.
It is a contradiction
with $G \cglfuz H$ or $H \cglfuz J$, respectively.
\QED
\end{proof}

Finally, we introduce the notions of
\emph{integer pairs} $\ZZ$ and \emph{number pairs} $\DD$,
which play an important role in this paper.
\begin{eqnarray*}
  \ZZ &=& \bigl\{\,\cgpos{m}{n}\;:\; m,n \in \Z \,\bigr\} \\
  \DD &=& \bigl\{\,\cgpos{x}{y}\;:\; x,y \in \D \,\bigr\}
\end{eqnarray*}

By the simplicity theorem,
it is easy to see that
$n = \cgpos{n-1}{n+1}$ for any $n \in \Z$.
Since $\frac{n}{2^d} \cong \cgpos{\frac{n-1}{2^d}}{\frac{n+1}{2^d}}$,
we obtain the following proposition.

\begin{proposition}
For $\mathcal{A}\in\{\Z,\D\}$,
we have $\mathcal{A} \subset \cgpos{\mathcal{A}}{\mathcal{A}}$.
\end{proposition}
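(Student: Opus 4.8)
The plan is to exhibit, for each value $a\in A$, a position of the form $\cgpos{x}{y}$ with $x,y\in A$ whose value is $a$, and to confirm the equality with the Simplicity Theorem — the standard fact that for numbers $x<y$ the position $\cgpos{x}{y}$ equals the simplest number strictly between $x$ and $y$ (see \cite{S13}). I would split into the integer case and the strictly dyadic case.

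For $a\in\Z$ I claim $a=\cgpos{a-1}{a+1}$. Indeed $a-1<a+1$ are numbers, the only integer in the open interval $(a-1,a+1)$ is $a$ itself, and an integer is simpler than any non-integer dyadic, so $a$ is the simplest number in $(a-1,a+1)$ and hence $\cgpos{a-1}{a+1}=a$ by the Simplicity Theorem. Since $a-1,a+1\in\Z$, this gives $a\in\cgpos{\Z}{\Z}$, and also $a\in\cgpos{\D}{\D}$ because $\Z\subset\D$.

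For $a\in\D\setminus\Z$, write $a=\tfrac{n}{2^d}$ with $n$ odd and $d\ge 1$; this is exactly the canonical form recalled in Section~2, namely $a\cong\cgpos{\frac{n-1}{2^d}}{\frac{n+1}{2^d}}$, and both options lie in $\D$, so $a\in\cgpos{\D}{\D}$. (If one prefers not to invoke the recalled canonical form, the same conclusion follows from the Simplicity Theorem: the interval $\bigl(\frac{n-1}{2^d},\frac{n+1}{2^d}\bigr)$ contains no integer and no dyadic with denominator $2^{d-1}$ or smaller, since $n$ is odd, so $\tfrac{n}{2^d}$ is its simplest element.) Together with the previous paragraph this yields $A\subset\cgpos{A}{A}$ for both $A\in\{\Z,\D\}$.

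I do not expect a genuine obstacle here; the only point that needs attention is that the canonical form of a nonzero integer $m$ is $\cgpos{m-1}{}$, which has an empty Right option set and therefore does not by itself witness membership in $\cgpos{\Z}{\Z}$. For this reason the integer case must use the non-canonical representation $\cgpos{m-1}{m+1}$, whose value is fixed by the Simplicity Theorem rather than read off from the recalled canonical forms; everything else is a direct appeal to standard facts.
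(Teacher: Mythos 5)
Your proof is correct; the paper states this proposition without proof, as standard background from \cite{S13}, and your route --- $a=\cgpos{a-1}{a+1}$ for an integer $a$ via the Simplicity Theorem, and the canonical form $\cgpos{\frac{n-1}{2^d}}{\frac{n+1}{2^d}}$ for a non-integer dyadic --- is exactly the standard argument, correctly noting that canonical forms of integers (having an empty option set on one side) cannot themselves witness membership in $\ZZ$. One cosmetic slip in your closing remark: the canonical form $\cgpos{m-1}{}$ applies to positive integers only; for negative $m$ it is $\cgpos{}{m+1}$, with the empty set on the Left --- this does not affect your argument.
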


\begin{proposition}\label{pro:apair-a}
Let $\mathcal{A}\in\{\Z,\D\}$ and $x_1,x_2,y\in \mathcal{A}$ with
$\cgpos{x_1}{x_2} \in \cgpos{\mathcal{A}}{\mathcal{A}}\setminus \mathcal{A}$.
Then $\cgpos{x_1}{x_2} \cglfuz y \Leftrightarrow x_2 \leq y$.
\end{proposition}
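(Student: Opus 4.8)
The plan is to use that the four basic relations partition all ordered pairs of positions, so that ``$\cgpos{x_1}{x_2}\cglfuz y$'' is precisely the negation of ``$\cgpos{x_1}{x_2}\ge y$''. Since $x_2$ and $y$ are numbers, they are totally ordered, so the claim is equivalent to $\cgpos{x_1}{x_2}\ge y\iff x_2>y$, and I would prove the two implications separately.

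The implication $x_2\le y\imply\cgpos{x_1}{x_2}\cglfuz y$ is the easy half, and it uses neither the hypothesis $\cgpos{x_1}{x_2}\notin A$ nor any arithmetic: from $\cgpos{x_1}{x_2}\cglfuz x_2$ (a special case of the proposition $G^L\cglfuz G\cglfuz G^R$, with $x_2$ the unique Right option of $\cgpos{x_1}{x_2}$) together with $x_2\le y$, the dual of the mixed-transitivity proposition ($G\cggfuz H\ge J\imply G\cggfuz J$) --- obtained by negating every position, so that $\cggfuz,\ge$ become $\cglfuz,\le$ --- yields $\cgpos{x_1}{x_2}\cglfuz y$.

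For the converse I would argue by contraposition: assuming $x_2>y$, I show $\cgpos{x_1}{x_2}\ge y$. The decisive preliminary step is the inequality $x_1\ge y$, and this is where the hypothesis $\cgpos{x_1}{x_2}\in\cgpos{A}{A}\setminus A$ is essential. If $x_1\ge x_2$, then $x_1\ge x_2>y$ and we are done. If instead $x_1<x_2$, then $\cgpos{x_1}{x_2}$ equals a number by the Simplicity Theorem, so $\cgpos{x_1}{x_2}\notin A$ can only occur when $A=\Z$; and since the simplest number strictly between two integers is itself an integer whenever any number lies strictly between them, $\cgpos{x_1}{x_2}\notin\Z$ forces $x_2=x_1+1$, whence $y<x_2=x_1+1$ with $y$ an integer gives $y\le x_1$. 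Granting $x_1\ge y$, I would finish by verifying $\cgpos{x_1}{x_2}-y\ge0$, that is, that Left wins $\cgpos{x_1}{x_2}-y$ moving second: a Right move inside $\cgpos{x_1}{x_2}$ can only reach $x_2$, and the resulting value $x_2-y$ is a positive number, won by Left (now to move); a Right move inside $-y$, to $-y^L$ for some Left option $y^L$ of the canonical form of $y$, is answered by Left moving $\cgpos{x_1}{x_2}$ to $x_1$, leaving the value $x_1-y^L$, which is positive since $x_1\ge y>y^L$ and now has Right to move, hence is again won by Left; if $y$ has no Left option only the first case occurs. One can also bypass the explicit strategy: $\cgpos{x_1}{x_2}\cglfuz y$ holds iff $x_2\le y$ or $\cgpos{x_1}{x_2}\le y^L$ for some Left option $y^L$, and the latter would force the Left option $x_1$ to satisfy $x_1<y^L<y$, contradicting $x_1\ge y$.

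I expect the main obstacle to be this structural extraction of $x_1\ge y$: one must invoke the Simplicity Theorem to pin down exactly how ``$\cgpos{x_1}{x_2}\notin A$'' constrains the pair $(x_1,x_2)$ --- in particular that $A=\D$ rules out $x_1<x_2$ entirely, whereas $A=\Z$ with $x_1<x_2$ forces $x_2=x_1+1$ --- while keeping track that every comparison that occurs is between genuine numbers, hence decided by the usual total order, and that the recursion runs over the literal option sets, namely the single Right option $x_2$ of $\cgpos{x_1}{x_2}$ and the Left options of the canonical form of $y$.
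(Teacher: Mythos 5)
Your argument is correct, but note that the paper does not prove this statement at all: it appears in Section~2 as one of the background propositions quoted from \cite{S13}, so there is no in-paper proof to compare against. Your two halves are sound. The easy direction ($x_2\le y\imply\cgpos{x_1}{x_2}\cglfuz y$) is exactly the combination $\cgpos{x_1}{x_2}\cglfuz x_2\le y$ plus the dual of the mixed-transitivity proposition, and it indeed needs neither $\notin A$ nor arithmetic. For the converse you correctly identify that the hypothesis $\cgpos{x_1}{x_2}\in\cgpos{A}{A}\setminus A$ is what rescues the claim (it fails for, say, $\cgpos{0}{5}=1$), and your extraction via the Simplicity Theorem --- $A=\D$ forces $x_1\ge x_2$, while $A=\Z$ allows at worst $x_2=x_1+1$ --- together with the second-player strategy for Left in $\cgpos{x_1}{x_2}-y$ (or the recursive characterization of $\ge$, with $y^L<y$ for the canonical form of a number) is complete. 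One remark: within the paper's own toolkit there is a shorter route for the hard direction that avoids both the Simplicity case analysis and the explicit strategy: since $\cgpos{x_1}{x_2}\notin A$ and its only Right option is $x_2\in A$, the definition of the stop gives $\RS{A}(\cgpos{x_1}{x_2})=x_2$, and then Proposition~\ref{Pro:stop} yields $x_2>y\imply\cgpos{x_1}{x_2}>y$, which is precisely the contrapositive you need; here the hypothesis $\notin A$ enters only through the computation of the stop. Your version buys self-containedness (it proves the fact from the order-theoretic primitives rather than from the stop machinery), at the cost of the structural detour through the Simplicity Theorem.
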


\begin{proof}
From Proposition \ref{pro:trans},
we have $\cgpos{x_1}{x_2} - y = \cgpos{x_1-y}{x_2-y}$.
Hence
\(\cgpos{x_1}{x_2} \cglfuz y
  \Leftrightarrow \cgpos{x_1}{x_2} - y \cglfuz 0
  \Leftrightarrow \cgpos{x_1-y}{x_2-y} \cglfuz 0
  \Leftrightarrow \cgpos{x_1-y}{x_2-y} \in \mathcal{R} \cup \mathcal{N}
  \Leftrightarrow x_2-y \in \mathcal{R} \cup \mathcal{P}
  \Leftrightarrow x_2-y \leq 0
  \Leftrightarrow x_2 \leq y\).
\QED
\end{proof}

\subsection{Ruleset}

We introduce the concept of a ruleset
as a formalization of the combinatorial games studied in this paper
(we restrict our attention to short partizan games under normal play).

\begin{definition}
A \emph{ruleset} is a triple
$\langle \mathcal{G}, {\move{L}{}}, {\move{R}{}} \rangle$,
where
$\mathcal{G}$ is the set of \emph{game positions},
and both ${\move{L}{}}$ and ${\move{R}{}}$
are well-founded and finitely branching relations on $\mathcal{G}$.
\end{definition}

Here, $G \move{L}{} G'$ (resp. $G \move{R}{} G'$)
represents a move by the Left (resp. Right) player
in the given combinatorial game.
We adopt the normal play convention,
under which the player who makes the last move is the winner.

For a game position $G$
with $\{G^{L_1},\ldots,G^{L_m}\} = \{ G' \mid G \move{L}{} G'\}$
and $\{G^{R_1},\ldots,G^{R_n}\} = \{ G' \mid G \move{R}{} G'\}$,
the game position $G$ is assigned a position
$H \cong \combgame{\{H^{L_1},\ldots,H^{L_m} \mid H^{R_1},\ldots,H^{R_n}\}}$
in the Conway algebra,
where $H^{L_i}$ and $H^{R_j}$ are the assigned positions of
$G^{L_i}$ and $G^{R_j}$, respectively.
According to convention,
we identify a game position with the assigned position,
and import various notions from the Conway algebra into game positions.
We call the canonical form of the assigned position of $G$
the \emph{position value} (the \emph{game value}, simply the \emph{value}) of $G$.

\section{Basic Diamond Properties}

In this section, we introduce the concept of
the $\Diamond_\Z$-property and the $\Diamond_\D$-property,
and explain how it relates to the integers $\Z$, the numbers $\D$, 
the integer pairs $\ZZ$, and the number pairs $\DD$.

\begin{lemma}\label{lem:diamond1}
Let $\mathcal{A} \in \{\Z, \D\}$, 
$G_0 \in \cgpos{\mathcal{A}}{\mathcal{A}}$ and $G_1 \in \cgpos{\mathcal{A}}{\mathcal{A}} \setminus \mathcal{A}$.
Then the following properties hold for any $x \in \mathcal{A}$:
\begin{itemize}
\item
  $\RS{\mathcal{A}}(G_1) \leq \RS{\mathcal{A}}(G_0)$ and $G_0 \cglfuz x$ $\imply G_1 \cglfuz x$
\item
  $\LS{\mathcal{A}}(G_1) \geq \LS{\mathcal{A}}(G_0)$ and $G_0 \cggfuz x$ $\imply G_1 \cggfuz x$
\end{itemize}
\end{lemma}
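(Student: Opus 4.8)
The plan is to prove only the first bullet; the second follows by the left–right symmetry of the hypotheses and conclusions. So assume $A\in\{\Z,\D\}$, $G_0\in\cgpos{A}{A}$, $G_1\in\cgpos{A}{A}\setminus A$, $x\in A$, $\RS{A}(G_1)\le\RS{A}(G_0)$, and $G_0\cglfuz x$; we want $G_1\cglfuz x$. First I would split on whether $G_0\in A$ or not.

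If $G_0\in A$, then $G_0\cglfuz x$ together with $G_0,x\in A\subset\D$ (numbers are totally ordered and never fuzzy with one another) forces $G_0\le x$; moreover $\RS{A}(G_0)=G_0$ by definition, so the hypothesis gives $\RS{A}(G_1)\le G_0\le x$. If $G_0\notin A$, then write $G_0=\cgpos{x_1}{x_2}$ with $x_1,x_2\in A$ (this is what $\cgpos{A}{A}\setminus A$ means) and apply Proposition~\ref{stop-invariant}-style reasoning: by definition $\RS{A}(G_0)=\min\{\LS{A}(x_2)\}=x_2$ since $x_2\in A$. Now Proposition~2.9 (the one classifying $\cgpos{x_1}{x_2}\cglfuz y$) applied with $y=x$ gives $G_0\cglfuz x\iff x_2\le x$, so $\RS{A}(G_0)=x_2\le x$, and again the hypothesis yields $\RS{A}(G_1)\le\RS{A}(G_0)\le x$. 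Either way we have reduced to: $G_1\in\cgpos{A}{A}\setminus A$ and $\RS{A}(G_1)\le x$, and we must conclude $G_1\cglfuz x$.

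For this final step, write $G_1=\cgpos{y_1}{y_2}$ with $y_1,y_2\in A$ and note $\RS{A}(G_1)=y_2$ as above. Then $y_2\le x$, and Proposition~2.9 (now in the forward direction, with the position $\cgpos{y_1}{y_2}\in\cgpos{A}{A}\setminus A$ and the element $y=x$) gives exactly $\cgpos{y_1}{y_2}\cglfuz x$, i.e.\ $G_1\cglfuz x$. The second bullet is handled identically after replacing $\cglfuz$ by $\cggfuz$, ``min'' by ``max'', $\RS{A}$ by $\LS{A}$, and $x_2$ by $x_1$, invoking the mirror-image form of Proposition~2.9.

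The main obstacle I anticipate is purely bookkeeping: making sure that in each case the relevant position really lies in $\cgpos{A}{A}\setminus A$ so that Proposition~2.9 applies (it does not apply when the position is itself an element of $A$), and correctly identifying $\RS{A}$ of a position of the form $\cgpos{x_1}{x_2}$ with $x_1,x_2\in A$ as $x_2$ — this uses that $\LS{A}(x_2)=x_2$ because $x_2\in A$. There is no hard inequality manipulation here; the content is entirely in unwinding the definitions of the stops and in the characterization of fuzziness-or-below against a number, which is exactly Proposition~2.9.
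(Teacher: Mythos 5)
Your proof is correct and follows essentially the same route as the paper's: both split on whether $G_0\in A$ or $G_0\in\cgpos{A}{A}\setminus A$, unwind the stop definitions to get $\RS{A}(G_1)\le x$ (equivalently $z_2\le x$ for $G_1=\cgpos{z_1}{z_2}$), and conclude via the characterization $\cgpos{x_1}{x_2}\cglfuz y\Leftrightarrow x_2\le y$. The only quibble is bookkeeping: the classification result you invoke is the paper's Proposition~2.6, not 2.9, but your parenthetical description identifies it unambiguously.
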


\begin{proof}
We will prove only the first claim.
The second can be shown in a similar way.
Let $G_1 = \cgpos{z_1}{z_2}$.

In the case of $G_0 = y \in \mathcal{A}$:
We have $z_2 \leq y$ and $y < x$
from $\RS{\mathcal{A}}(G_1) \leq \RS{\mathcal{A}}(G_0)$ and $G_0 \cglfuz x$,
respectively.
It follows from Proposition \ref{pro:apair-a}
that $G_1 = \cgpos{z_1}{z_2} \cglfuz y < x$.
Hence, by Proposition \ref{pro:transitive},
we obtain $G_1 \cglfuz x$.

In the case of $G_0 = \cgpos{y_1}{y_2} \in \cgpos{\mathcal{A}}{\mathcal{A}} \setminus \mathcal{A}$:
We have $z_2 \leq y_2$ and $y_2 \leq x$
from $\RS{\mathcal{A}}(G_1) \leq \RS{\mathcal{A}}(G_0)$ and $G_0 \cglfuz x$,
respectively.
It follows from Proposition \ref{pro:apair-a}
that $G_1 = \cgpos{z_1}{z_2} \cglfuz y_2 \leq x$.
Hence, by Proposition \ref{pro:transitive}, we obtain $G_1 \cglfuz x$.
\QED
\end{proof}

The property given in this lemma does not hold in general.
For instance, it fails if we allow $G_1 \in \mathcal{A}$.
A concrete counter example is
$G_0 = \cgstar$ and $G_1 = 0$
because $\RS{\mathcal{A}}(G_1) = 0 = \RS{\mathcal{A}}(G_0)$
and $G_0 \cglfuz 0$ but $G_1 = 0 \not \cglfuz 0$.

Within the framework of this paper,
positions that are fuzzy to each other --- such as $0$ and $\cgstar$ ---
do not appear simultaneously among the Left (or Right) options,
as in this example.
This restriction prevents canonical forms from becoming quite complicated.

\begin{definition}
Let $\mathcal{A} \in \{\Z,\D\}$.
For a position $G$,
the sets of \emph{guide left  options} and \emph{guide right options}, denoted by $\gd_\mathcal{A}^L(G)$ and $\gd_\mathcal{A}^R(G)$, are defined as follows:
\[\begin{array}{rcl}
  \gd_\mathcal{A}^L(G) &=&
    \left\{\begin{array}{ll}
      \emptyset
        & \text{ if $G \in \mathcal{A}$} \\
      \{G^L : G^L = \LS{\mathcal{A}}(G)\}
        & \text{ if $G \notin \mathcal{A}$ and $\LS{\mathcal{A}}(G) \in G^\mathcal{L}$} \\
      \{G^L : \RS{\mathcal{A}}(G^L) = \LS{\mathcal{A}}(G)\}
        &\text{ otherwise}
    \end{array}\right. \\[20pt]
  \gd_\mathcal{A}^R(G) &=&
    \left\{\begin{array}{ll}
      \emptyset
        & \text{ if $G \in \mathcal{A}$} \\
      \{G^R : G^R = \RS{\mathcal{A}}(G)\}
        & \text{ if $G \notin \mathcal{A}$ and $\RS{\mathcal{A}}(G) \in G^\mathcal{R}$} \\
      \{G^R : \LS{\mathcal{A}}(G^R) = \RS{\mathcal{A}}(G)\}
        &\text{ otherwise}
    \end{array}\right. 
\end{array}\]
\end{definition}

\begin{definition}\label{def:diamond1}
Let $\mathcal{A} \in \{\Z,\D\}$. We say that a position $G$ has
the \emph{diamond property for $\mathcal{A}$
or simply} \emph{$\Diamond_\mathcal{A}$-property}
if there exist $G^L \in \gd_\mathcal{A}^L(G)$, $G^R \in \gd_\mathcal{A}^R(G)$, and $x \in \mathcal{A}$
such that
$G^L \cglfuz x \cglfuz G^R$.
\end{definition}

\begin{figure}[htb]
\centering
\begin{tikzpicture}
\node at (0,6) {
\begin{tikzpicture}[x=14pt,y=14pt]
\node at (2.2,1.9) {$\Diamond_\Z$};
\node (g) at (2,4) {$G$};
\node (gl) at (0,2) {$G^L$};
\node (gr) at (4,2) {$G^R$};
\node (h) at (3,0) {$x~(\in \Z)$};
\draw [->,thick] (g) to (gl);
\draw [->,thick] (g) to (gr);

\coordinate (lfl) at ($(0,1.7)+(-45:.8)$);
\draw [thin] (lfl) -- ($(lfl)+(-25:.6)$) -- ($(lfl)+(-65:.6)$) --cycle;
\draw [thin] ($(lfl)+(-25:.6)+(-45:.15)$) -- ($(lfl)+(-65:.6)+(-45:.15)$);

\coordinate (lfr) at ($(2.3,1.1)+(-45:.8)$);
\draw [thin] (lfr) -- ($(lfr)+(65:.6)$) -- ($(lfr)+(25:.6)$) --cycle;
\draw [thin] ($(lfr)+(65:.6)+(45:.15)$) -- ($(lfr)+(25:.6)+(45:.15)$);
\end{tikzpicture}};
\node at (4,6) {
\begin{tikzpicture}[x=14pt,y=14pt]
\node at (2.2,1.9) {$\Diamond_\D$};
\node (g) at (2,4) {$G$};
\node (gl) at (0,2) {$G^L$};
\node (gr) at (4,2) {$G^R$};
\node (h) at (3,0) {$x~(\in \D)$};
\draw [->,thick] (g) to (gl);
\draw [->,thick] (g) to (gr);

\coordinate (lfl) at ($(0,1.7)+(-45:.8)$);
\draw [thin] (lfl) -- ($(lfl)+(-25:.6)$) -- ($(lfl)+(-65:.6)$) --cycle;
\draw [thin] ($(lfl)+(-25:.6)+(-45:.15)$) -- ($(lfl)+(-65:.6)+(-45:.15)$);

\coordinate (lfr) at ($(2.3,1.1)+(-45:.8)$);
\draw [thin] (lfr) -- ($(lfr)+(65:.6)$) -- ($(lfr)+(25:.6)$) --cycle;
\draw [thin] ($(lfr)+(65:.6)+(45:.15)$) -- ($(lfr)+(25:.6)+(45:.15)$);
\end{tikzpicture}};
\end{tikzpicture}
\caption{$\Diamond_\Z$-property and $\Diamond_\D$-property}
\end{figure}
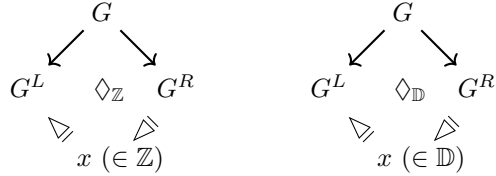

\begin{lemma}\label{lem:diamond2}
Let $\mathcal{A} \in \{\Z, \D\}$.
If a position $G$ has the $\Diamond_\mathcal{A}$-property
and $G^\mathcal{L}, G^\mathcal{R} \subset \cgpos{\mathcal{A}}{\mathcal{A}}$,
then $G \in \mathcal{A}$.
\end{lemma}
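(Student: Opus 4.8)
The plan is to derive $G\in A$ from the Simplicity Theorem (Proposition~\ref{pro:simpl}): it is enough to produce some $x\in A$ with $G^{\mathcal L}\cglfuz x\cglfuz G^{\mathcal R}$, i.e.\ with $G^{L'}\cglfuz x$ for every Left option $G^{L'}$ and $x\cglfuz G^{R'}$ for every Right option $G^{R'}$. If $G\in A$ there is nothing to prove, so assume $G\notin A$. Then the $\Diamond_A$-property (Definition~\ref{def:diamond1}) supplies a guide left option $G^L\in\gd_A^L(G)$, a guide right option $G^R\in\gd_A^R(G)$, and an $x\in A$ with $G^L\cglfuz x\cglfuz G^R$; in particular $\gd_A^L(G)$ and $\gd_A^R(G)$ are nonempty, hence so are $G^{\mathcal L},G^{\mathcal R}$, and $\LS{A}(G),\RS{A}(G)$ are computed by the $\max/\min$ clauses of the definition of the stops. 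I will prove $G^{\mathcal L}\cglfuz x$; the statement $x\cglfuz G^{\mathcal R}$ is symmetric, using the second claim of Lemma~\ref{lem:diamond1} and the clauses defining $\gd_A^R$.

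The next step is to extract what ``$G^L$ is a guide option'' gives us. In every clause of the definition of $\gd_A^L(G)$ one has $\RS{A}(G^L)=\LS{A}(G)$ (directly in the last clause; in the clause $\LS{A}(G)\in G^{\mathcal L}$ because then $G^L=\LS{A}(G)\in A$, so $\RS{A}(G^L)=G^L$). Moreover there is a dichotomy: either $G^L\in A$, in which case $\LS{A}(G)\in G^{\mathcal L}$ and $G^L=\LS{A}(G)$, and since $G^L\cglfuz x$ with $G^L,x\in A\subseteq\D$ forces $G^L<x$ we get $\LS{A}(G)<x$; or $G^L\notin A$, in which case we must be in the last clause, so $\LS{A}(G)\notin G^{\mathcal L}$, and $G^L\cglfuz x$ gives $G^L\not>x$, whence $\RS{A}(G^L)\not>x$ by the contrapositive of Proposition~\ref{Pro:stop} and therefore $\LS{A}(G)=\RS{A}(G^L)\le x$.

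Now fix an arbitrary $G^{L'}\in G^{\mathcal L}$ and aim for $G^{L'}\cglfuz x$. Note $\RS{A}(G^{L'})\le\LS{A}(G)=\RS{A}(G^L)$, since $\LS{A}(G)=\max\{\RS{A}(G^{L''}):G^{L''}\in G^{\mathcal L}\}$. If $G^{L'}\in\cgpos{A}{A}\setminus A$, I apply the first claim of Lemma~\ref{lem:diamond1} with $G_0=G^L$ (which lies in $\cgpos{A}{A}$ by hypothesis) and $G_1=G^{L'}$: the hypotheses $\RS{A}(G^{L'})\le\RS{A}(G^L)$ and $G^L\cglfuz x$ hold, so the lemma yields $G^{L'}\cglfuz x$. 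If instead $G^{L'}\in A$, then $G^{L'}=\RS{A}(G^{L'})\le\LS{A}(G)$; in the first case of the dichotomy this is $\le\LS{A}(G)<x$, and in the second case $\LS{A}(G)\notin G^{\mathcal L}$ forces $G^{L'}\ne\LS{A}(G)$, hence $G^{L'}<\LS{A}(G)\le x$; either way $G^{L'}<x$, so $G^{L'}\cglfuz x$. This gives $G^{\mathcal L}\cglfuz x$; by the dual argument $x\cglfuz G^{\mathcal R}$, and Proposition~\ref{pro:simpl} concludes $G\in A$.

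The point I expect to be the main obstacle is the sub-case $G^{L'}\in A$ together with a guide option $G^L\notin A$ (a genuine number pair): there the bound on $\LS{A}(G)$ is only the non-strict $\LS{A}(G)\le x$, while the Simplicity Theorem requires the strict $G^{L'}<x$. Recovering strictness is exactly where one must use that $G^L\notin A$ pins $\gd_A^L$ to its last clause, so $\LS{A}(G)$ itself is not among the Left options and hence cannot equal $G^{L'}$. A minor related nuisance is that the excerpt records the equivalence $\cgpos{x_1}{x_2}\cglfuz y\Leftrightarrow x_2\le y$ but not its mirror image, so on the Right-option side the step from $G^R\cggfuz x$ to $\RS{A}(G)\ge x$ should be routed through Proposition~\ref{Pro:stop} rather than through that equivalence.
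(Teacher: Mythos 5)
Your proposal is correct and follows essentially the same route as the paper: assume $G\notin A$, take the guide options and the witness $x$ from the $\Diamond_A$-property, handle non-number Left options via the first claim of Lemma~\ref{lem:diamond1}, handle Left options lying in $A$ by comparing stops (via Proposition~\ref{Pro:stop} and the fact that a non-number guide option forces $\LS{A}(G)$ not to occur among the Left-option values), and conclude with the Simplicity Theorem, dually on the Right side. Your dichotomy on whether the guide option $G^L$ lies in $A$ is just a slightly more explicit reorganization of the paper's three-way case split (in particular of its terse claim that $\RS{A}(G^{L'})=\RS{A}(G^L)$ with $G^{L'}\in A$ forces $G^L\in A$), so there is no substantive difference.
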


\begin{proof}
Assume that $G \notin \mathcal{A}$.
From the $\Diamond_\mathcal{A}$-property,
there exist a guide left  option $G^L$,
a guide right option $G^R$, and
$x \in \mathcal{A}$ such that
$G^L \cglfuz x \cglfuz G^R$.

We first show $G^{L'} \cglfuz x$ for any $G^{L'}\in G^{\mathcal{L}}\setminus \{G^L\}$.
Since $G^L$ is a guide left option, we have $\RS{\mathcal{A}}(G^{L'})\leq \RS{\mathcal{A}}(G^L)$.
\begin{itemize}
 \item
   In the case of $G^{L'} \notin \mathcal{A}$:
   Then $G^{L'} \cglfuz x$ follows from Lemma \ref{lem:diamond1}.
\item
  In the case of $G^{L'} \in \mathcal{A}$ and $\RS{\mathcal{A}}(G^{L'})<\RS{\mathcal{A}}(G^L)$:
  Then $G^{L'} < G^L$ follows from Proposition \ref{Pro:stop},
  and hence $G^{L'} \cglfuz x$.
\item
  In the case of $G^{L'} \in \mathcal{A}$ and $\RS{\mathcal{A}}(G^{L'})=\RS{\mathcal{A}}(G^L)$:
  Since $G^L$ is a guide left option, we have $G^L \in \mathcal{A}$.
  Hence $G^{L'} = G^L$, and $G^{L'} \cglfuz x$.
\end{itemize}
Thus, we have $G^{\mathcal{L}} \cglfuz x$. Similarly, we can also show $x \cglfuz G^{\mathcal{R}}$.
Therefore, Proposition \ref{pro:simpl} derives $G \in \mathcal{A}$,
which is a contradiction.
\QED
\end{proof}

We are now ready to state the theorem
that forms the foundation of all the results in this paper.

\begin{theorem}\label{th:diamond3}
Suppose that a set $\mathcal{G}$ of positions is closed under options.
For each $\mathcal{A} \in \{\Z,\D\}$,
if $\mathcal{G}$ is partitioned into $\mathcal{D}$ and $\mathcal{S}$
such that:
\begin{enumerate}
\item[(a)]
  For any $G \in \mathcal{D}$,
  either $G \in \mathcal{A}$ is known
  or $G$ has the $\Diamond_\mathcal{A}$-property, and
\item[(b)]
  $G \in \mathcal{S} \imply G^\mathcal{L}, G^\mathcal{R} \subset \mathcal{D}$,
\end{enumerate}
then the following properties hold:
\begin{enumerate}
\item[(1)]
  $G \in \mathcal{G} \imply G \in \cgpos{\mathcal{A}}{\mathcal{A}}$
\item[(2)]
  $G \in \mathcal{D} \imply G \in \mathcal{A}$
\end{enumerate}
\end{theorem}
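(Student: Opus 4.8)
The plan is to prove statements (1) and (2) simultaneously, by induction on the birthday of $G$ --- equivalently, along the well-founded ``is an option of'' relation, which is legitimate since every game here is short. Fix $A\in\{\Z,\D\}$, take $G\in\mathcal{G}$, and assume that (1) and (2) already hold for every member of $\mathcal{G}$ of smaller birthday. Since $\mathcal{G}$ is closed under options, each option of $G$ lies in $\mathcal{G}$ with strictly smaller birthday, so the induction hypothesis applies to all of them. I then split on whether $G\in\mathcal{D}$ or $G\in\mathcal{S}$.

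If $G\in\mathcal{D}$: hypothesis (a) gives that $G$ has the $\Diamond_A$-property, while the induction hypothesis (1) applied to the options gives $G^{\mathcal{L}},G^{\mathcal{R}}\subset\cgpos{A}{A}$. Feeding these two facts into Lemma~\ref{lem:diamond2} yields $G\in A$, which is precisely (2); and since $A\subset\cgpos{A}{A}$, statement (1) follows as well. So this whole branch reduces to Lemma~\ref{lem:diamond2}.

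If $G\in\mathcal{S}$: here (2) is vacuous (the classes $\mathcal{D}$ and $\mathcal{S}$ are disjoint), so only (1) needs proof. Hypothesis (b) gives $G^{\mathcal{L}},G^{\mathcal{R}}\subset\mathcal{D}$, so the induction hypothesis (2) --- and it is essential that one may invoke part (2) here, not merely part (1) --- shows every option of $G$ lies in $A$. Since $A$ is totally ordered, the options on each side are pairwise comparable, so repeatedly deleting dominated options (which preserves equality) collapses $G$ to $\cgpos{m}{n}$ with $m=\max G^{\mathcal{L}}$ and $n=\min G^{\mathcal{R}}$, giving $G\in\cgpos{A}{A}$. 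The only wrinkle is the degenerate case where an option set is empty: there I would instead apply the Simplicity Theorem (Proposition~\ref{pro:simpl}) with an appropriate witness $x\in A$ --- above all Left options, or below all Right options --- to conclude $G\in A\subset\cgpos{A}{A}$, and when both sets are empty $G=0\in A$.

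I do not anticipate a real obstacle: the argument is bookkeeping over a two-level structure. The point that genuinely requires care --- and that carries the content of the theorem --- is that for a position in $\mathcal{S}$ one must apply part (2) of the induction hypothesis to its options, not merely part (1): knowing only that the options lie in $\cgpos{A}{A}$ would be insufficient, as witnessed by $\cgup=\cgpos{0}{\cgstar}$, whose options $0\in\D\subset\DD$ and $\cgstar=\cgpos{0}{0}\in\DD$ both lie in $\DD$ while $\cgup\notin\DD$. It is exactly the $\Diamond_A$-property, through Lemma~\ref{lem:diamond2}, that drives every $\mathcal{D}$-position all the way down into $A$, and this is what makes the two-level bound self-consistent.
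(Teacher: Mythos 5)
Your proof is correct and follows essentially the same route as the paper: simultaneous induction along the option relation, with Lemma~\ref{lem:diamond2} disposing of the $\mathcal{D}$ case and removal of dominated options (using that $A$ is totally ordered) giving $G\in\cgpos{A}{A}$ in the $\mathcal{S}$ case. The only differences are cosmetic: you conclude $G\in A$ directly rather than by contradiction, and you spell out the empty-option-set degenerate case, which the paper treats implicitly via $A\subset\cgpos{A}{A}$.
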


\begin{proof}
We proceed by induction on $G$ ordered by the moving relation $\to$.
\begin{itemize}
\item
  In the case of $G \in \mathcal{A}$:
  It is trivial
  because of $\mathcal{A} \subset \cgpos{\mathcal{A}}{\mathcal{A}}$.
\item
  In the case of $G \notin \mathcal{A}$ and $G \in \mathcal{S}$:
  From the assumption (b)
  and the induction hypothesis (2),
  we have $G^\mathcal{L}, G^\mathcal{R} \subset \mathcal{A}$.
  In either case $\mathcal{A} = \Z$ or $\mathcal{A} = \D$, any two positions in $\mathcal{A}$ can be compared.
  Hence, in $G^\mathcal{L}$ and $G^\mathcal{R}$, we can remove all dominated options and keep at most one on each side.
  Therefore, $G\in\cgpos{\mathcal{A}}{\mathcal{A}}$.
\item
  In the case of $G \notin \mathcal{A}$ and $G \in \mathcal{D}$:
  From the induction hypothesis (1), we have $G^\mathcal{L}, G^\mathcal{R} \subset \cgpos{\mathcal{A}}{\mathcal{A}}$.
  Since $G$ has the $\Diamond_\mathcal{A}$-property, Lemma \ref{lem:diamond2} implies that $G \in \mathcal{A}$, which is a contradiction.
  Therefore, this case does not occur.
\QED
\end{itemize}
\end{proof}

The phrase ``$G \in \mathcal{A}$ is known'' in condition (a) of this theorem
allows the use of arbitrary methods
for determining that a position is in $\mathcal{A}$
(e.g., Proposition \ref{pro:empty-int}).

By considering the special case $\mathcal{D}=\mathcal{G}$ and $\mathcal{S}=\emptyset$, we obtain the following corollary.

\begin{corollary}\label{coro:diamond4}
Suppose that a set $\mathcal{G}$ of positions is closed under options
and $\mathcal{A} \in \{\Z,\D\}$.
If for any $G \in \mathcal{G}$,
either $G \in \mathcal{A}$ is known
or $G$ has the $\Diamond_\mathcal{A}$-property,
then $G \in \mathcal{A}$.
\end{corollary}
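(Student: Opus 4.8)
The plan is to obtain the corollary as the degenerate special case of Theorem~\ref{th:diamond3} in which the whole set is placed in $\mathcal{D}$. Concretely, I would take $\mathcal{D} = \mathcal{G}$ and $\mathcal{S} = \emptyset$; this is trivially a partition of $\mathcal{G}$, and $\mathcal{G}$ is closed under options by hypothesis, so the setup of Theorem~\ref{th:diamond3} is in place.

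Next I would verify the two structural conditions. Condition~(a) requires that every $G \in \mathcal{D}$ have the $\Diamond_A$-property; since $\mathcal{D} = \mathcal{G}$, this is exactly the hypothesis of the corollary. Condition~(b), namely $G \in \mathcal{S} \imply G^\mathcal{L}, G^\mathcal{R} \subset \mathcal{D}$, holds vacuously because $\mathcal{S}$ is empty. Theorem~\ref{th:diamond3} then yields conclusion~(2): $G \in \mathcal{D} \imply G \in A$. As $\mathcal{D} = \mathcal{G}$, this says precisely that every position in $\mathcal{G}$ has a value in $A$, which is the claim. (Conclusion~(1), $\mathcal{G} \subset \cgpos{A}{A}$, is then automatic, since $A \subset \cgpos{A}{A}$.)

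I expect no genuine obstacle here; the only point that deserves a moment's care is that the hypothesis ``$\mathcal{G}$ closed under options'' is indispensable for invoking Theorem~\ref{th:diamond3}, and it is indeed assumed in the corollary. Should a self-contained argument be preferred, the same conclusion follows by a direct induction on the moving relation $\to$: for $G \notin A$ the induction hypothesis gives $G^\mathcal{L}, G^\mathcal{R} \subset \cgpos{A}{A}$, and then Lemma~\ref{lem:diamond2} forces $G \in A$. This merely re-runs the $\mathcal{D}$-case in the proof of Theorem~\ref{th:diamond3}, so the one-line specialization is the cleaner route.
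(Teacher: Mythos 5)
Your proposal is correct and is exactly how the paper obtains this corollary: it specializes Theorem~\ref{th:diamond3} with $\mathcal{D}=\mathcal{G}$ and $\mathcal{S}=\emptyset$, so condition (b) holds vacuously and conclusion (2) gives $\mathcal{G}\subset A$. Your verification of the hypotheses and the remark on the closure-under-options assumption match the intended argument.
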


\section{Various Diamond Properties}

In this section,
we introduce various diamond properties.
We begin with those that are instances of 
both the $\Diamond_\Z$-property and the $\Diamond_\D$-property.

\begin{definition}\label{def:dia-Zs}
We say that a position $G$ has:
\begin{itemize}
\item
  the \emph{$\Diamond$-property} for $\mathcal{A}$
  if 
  there exist 
  $G^L \in \gd_{\mathcal{A}}^L(G)$, $G^R \in \gd_{\mathcal{A}}^R(G)$,
  and $H$
  such that $G^L \move{R}{} H \lmove{L}{} G^R$,
\item
  the \emph{$\Diamond^{\leq}$-property} for $\mathcal{A}$
  if 
  there exist 
  $G^L \in \gd_{\mathcal{A}}^L(G)$, $G^R \in \gd_{\mathcal{A}}^R(G)$,
  $G^{LR}$, and $G^{RL}$
  such that $G^{LR} \leq G^{RL}$,
\item
  the \emph{$\Diamond_L^{\cglfuz}$-property} for $\mathcal{A}$
  if 
  there exist 
  $G^L \in \gd_{\mathcal{A}}^L(G)$, $G^R \in \gd_{\mathcal{A}}^R(G)$,
  and $G^{LR}$
  such that $G^{LR} \cglfuz G^R$, and
\item
  the \emph{$\Diamond_R^{\cglfuz}$-property} for $\mathcal{A}$
  if 
  there exist 
  $G^L \in \gd_{\mathcal{A}}^L(G)$, $G^R \in \gd_{\mathcal{A}}^R(G)$,
  and $G^{RL}$
  such that $G^L \cglfuz G^{RL}$.
\end{itemize}
\end{definition}

We display these diamond properties in Fig.\ref{fig:dia-Zs}.
Note that,
when combined with Proposition \ref{pro:empty-int},
the $\Diamond^{\leq}$-property corresponds to the $F_2$-property
introduced in \cite{CHNS21},
and includes the $\Diamond$-property as the special case
where $G^{LR}$ and $G^{RL}$ are the same position.

\begin{figure}[htb]
\centering
\begin{tikzpicture}
\node at (0,0) {
\begin{tikzpicture}[x=14pt,y=14pt]
\node at (2,2) {$\Diamond$};
\node (g) at (2,4) {$G$};
\node (gl) at (0,2) {$G^L$};
\node (gr) at (4,2) {$G^R$};
\node (h) at (2,0) {$H$};
\draw [->,thick] (g) to (gl);
\draw [->,thick] (g) to (gr);
\draw [->,thick,dotted] (gl) to (h);
\draw [->,thick,dotted] (gr) to (h);
\end{tikzpicture}};
\node at (3.5,0) {
\begin{tikzpicture}[x=14pt,y=14pt]
\node at (2.1,1.9) {$\Diamond^{\leq}$};
\node (g) at (2,4) {$G$};
\node (gl) at (0,2) {$G^L$};
\node (gr) at (4,2) {$G^R$};
\node (glr) at (.7,0) {$G^{LR}$};
\node (grl) at (3.4,0) {$G^{RL}$};
\node at (2,0) {$\leq$};
\draw [->,thick] (g) to (gl);
\draw [->,thick] (g) to (gr);
\draw [->,thick,dotted] (gl) to (glr);
\draw [->,thick,dotted] (gr) to (grl);
\end{tikzpicture}};
\node at (7,0) {
\begin{tikzpicture}[x=14pt,y=14pt]
\node at (2,1.9) {$\Diamond_L^{\cglfuz}$};
\node (g) at (2,4) {$G$};
\node (gl) at (0,2) {$G^L$};
\node (gr) at (4,2) {$G^R$};
\node (glr) at (2,0) {$G^{LR}$};
\draw [->,thick] (g) to (gl);
\draw [->,thick] (g) to (gr);
\draw [->,thick,dotted] (gl) to (glr);
\coordinate (lfr) at ($(2.3,1.1)+(-45:.8)$);
\draw [thin] (lfr) -- ($(lfr)+(65:.6)$) -- ($(lfr)+(25:.6)$) --cycle;
\draw [thin] ($(lfr)+(65:.6)+(45:.15)$) -- ($(lfr)+(25:.6)+(45:.15)$);
\end{tikzpicture}};
\node at (10.5,0) {
\begin{tikzpicture}[x=14pt,y=14pt]
\node at (2,1.9) {$\Diamond_R^{\cglfuz}$};
\node (g) at (2,4) {$G$};
\node (gl) at (0,2) {$G^L$};
\node (gr) at (4,2) {$G^R$};
\node (grl) at (2,0) {$G^{RL}$};
\draw [->,thick] (g) to (gl);
\draw [->,thick] (g) to (gr);
\draw [->,thick,dotted] (gr) to (grl);
\coordinate (lfl) at ($(0,1.7)+(-45:.8)$);
\draw [thin] (lfl) -- ($(lfl)+(-25:.6)$) -- ($(lfl)+(-65:.6)$) --cycle;
\draw [thin] ($(lfl)+(-25:.6)+(-45:.15)$) -- ($(lfl)+(-65:.6)+(-45:.15)$);
\end{tikzpicture}};
\end{tikzpicture}
\caption{Diamond properties in Definition \ref{def:dia-Zs}}
\label{fig:dia-Zs}
\end{figure}
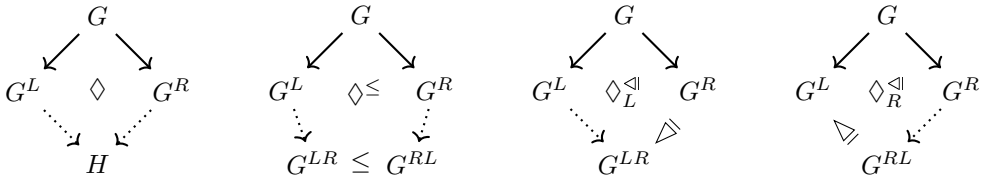

In both cases $\mathcal{A} \in \{\Z,\D\}$,
by adding the assumption
\begin{enumerate}
\item[(c)]
  $G \in \mathcal{D} \imply G^\mathcal{LR}, G^\mathcal{RL} \subset \mathcal{D}$
\end{enumerate}
to Theorem \ref{th:diamond3},
we can assume that 
$G^\mathcal{LR}, G^\mathcal{RL} \subset \mathcal{A}$
as an induction hypothesis in the proof.
Under the hypothesis,
every diamond property in Definition \ref{def:dia-Zs}
guarantees the $\Diamond_\mathcal{A}$-property for each $\mathcal{A} \in \{\Z,\D\}$.
\begin{description}
\item[$\Diamond$-property:]
  We have $H \in \mathcal{A}$ and $G^L \cglfuz H \cglfuz G^R$
  because of $G^L \move{R}{} H \lmove{L}{} G^R$.
\item[$\Diamond^\leq$-property:]
  We have $G^{LR}, G^{RL} \in \mathcal{A}$ and $G^L \cglfuz G^{LR},G^{RL} \cglfuz G^R$
  because of $G^L \move{R}{} G^{LR} \leq G^{RL} \lmove{L}{} G^R$.
\item[$\Diamond_L^{\cglfuz}$-property:]
  We have $G^{LR} \in \mathcal{A}$ and $G^L \cglfuz G^{LR} \cglfuz G^R$
  because of $G^L \move{R}{} G^{LR} \cglfuz G^R$.
\item[$\Diamond_R^{\cglfuz}$-property:]
  We have $G^{RL} \in \mathcal{A}$ and $G^L \cglfuz G^{RL} \cglfuz G^R$
  because of $G^L \cglfuz G^{RL} \lmove{L}{} G^R$.
\end{description}
Hence we obtain the following corollary of Theorem \ref{th:diamond3}.

\begin{corollary}\label{co:variantZZ}
Suppose that a set $\mathcal{G}$ of positions is closed under options.
If $\mathcal{G}$ is partitioned into $\mathcal{D}$ and $\mathcal{S}$
such that:
\begin{enumerate}
\item[(a)]
  For any $G \in \mathcal{D}$,
  either $G \in \mathbb{Z}$ is known
  or
  $G$
  has one of diamond properties for $\Z$ in Definition \ref{def:dia-Zs},
\item[(b)]
  $G \in \mathcal{S} \imply G^\mathcal{L}, G^\mathcal{R} \subset \mathcal{D}$, and
\item[(c)]
  $G \in \mathcal{D} \imply G^\mathcal{LR}, G^\mathcal{RL} \subset \mathcal{D}$,
\end{enumerate}
then the following properties hold:
\begin{enumerate}
\item[(1)]
  $G \in \mathcal{G} \imply G \in \ZZ$
\item[(2)]
  $G \in \mathcal{D} \imply G \in \Z$
\end{enumerate}
\end{corollary}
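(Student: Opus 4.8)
The plan is to re-run the induction in the proof of Theorem~\ref{th:diamond3}, specialized to $A = \Z$, while carrying along exactly the extra information that hypothesis~(c) supplies. Concretely, I would prove by induction on $G$ ordered by the move relation $\to$ the two conclusions (1) $G \in \mathcal{G} \imply G \in \ZZ$ and (2) $G \in \mathcal{D} \imply G \in \Z$, invoking at the inductive step the auxiliary fact that whenever $G \in \mathcal{D}$ every position appearing in $G^\mathcal{LR} \cup G^\mathcal{RL}$ belongs to $\Z$. That fact is immediate: each such position is reached from $G$ in two moves, hence lies strictly $\to$-below $G$, and by~(c) it lies in $\mathcal{D}$, so the induction hypothesis for~(2) applies to it.

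The case split mirrors that of Theorem~\ref{th:diamond3}. If $G \in \Z$ then $G \in \ZZ$ because $\Z \subset \cgpos{\Z}{\Z}$. If $G \notin \Z$ and $G \in \mathcal{S}$, then~(b) together with the induction hypothesis for~(2) gives $G^\mathcal{L}, G^\mathcal{R} \subset \Z$; since integers are totally ordered, deleting dominated options leaves at most one option on each side, so $G \in \ZZ$. The essential case is $G \notin \Z$ and $G \in \mathcal{D}$: the induction hypothesis for~(1) gives $G^\mathcal{L}, G^\mathcal{R} \subset \cgpos{\Z}{\Z}$, and the auxiliary fact gives $G^\mathcal{LR}, G^\mathcal{RL} \subset \Z$. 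I then verify that whichever diamond property of Definition~\ref{def:dia-Zs} is assumed in~(a) upgrades to the $\Diamond_\Z$-property of Definition~\ref{def:diamond1}. The guide options $G^L \in \gd_\Z^L(G)$ and $G^R \in \gd_\Z^R(G)$ produced by that property are already the ones Definition~\ref{def:diamond1} asks for, so it remains only to exhibit $x \in \Z$ with $G^L \cglfuz x \cglfuz G^R$: for the $\Diamond$-property take $x = H \in \Z$, noting $G^L \cglfuz H \cglfuz G^R$ since $H$ is a Right option of $G^L$ and a Left option of $G^R$; for the $\Diamond^{\leq}$-property take $x = G^{RL} \in \Z$, using $G^L \cglfuz G^{LR} \leq G^{RL}$ and $G^{RL} \cglfuz G^R$; for $\Diamond_L^{\cglfuz}$ take $x = G^{LR}$; and for $\Diamond_R^{\cglfuz}$ take $x = G^{RL}$. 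Having established the $\Diamond_\Z$-property, Lemma~\ref{lem:diamond2} with $A = \Z$ forces $G \in \Z$, contradicting $G \notin \Z$; hence this case does not occur, and (1), (2) follow.

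The delicate point --- and the reason Corollary~\ref{co:variantZZ} is not just Theorem~\ref{th:diamond3} applied verbatim --- is that checking hypothesis~(a) of that theorem for $A = \Z$, i.e.\ that every $G \in \mathcal{D}$ has the $\Diamond_\Z$-property, already requires knowing that the grand-options of $G$ are integers, and that information is only available from inside the induction. So the implication ``diamond property of Definition~\ref{def:dia-Zs} $\imply$ $\Diamond_\Z$-property'' must be interleaved with the induction of Theorem~\ref{th:diamond3}, not run before it. Beyond that, each step is routine, using only the elementary propositions of Section~2 (in particular $\Z \subset \cgpos{\Z}{\Z}$, $G^L \cglfuz G \cglfuz G^R$, and the compatibility of $\cglfuz$ with $\leq$) together with Lemma~\ref{lem:diamond2}.
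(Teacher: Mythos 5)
Your proposal is correct and follows essentially the same route as the paper: the paper likewise re-runs the induction of Theorem~\ref{th:diamond3} with hypothesis~(c) supplying $G^{\mathcal{LR}}, G^{\mathcal{RL}} \subset \Z$ as an additional induction hypothesis, and then checks property by property (choosing $H$, $G^{LR}$, or $G^{RL}$ as the witness $x$) that each diamond property of Definition~\ref{def:dia-Zs} yields the $\Diamond_\Z$-property so that Lemma~\ref{lem:diamond2} applies. Your closing observation that this upgrade must be interleaved with the induction rather than verified beforehand is exactly the (implicit) point of the paper's argument.
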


We also obtain the following corollary
as the special case $\mathcal{D} = \mathcal{G}$ and $\mathcal{S} = \emptyset$.

\begin{corollary}\label{co:variantZ}
Suppose that a set $\mathcal{G}$ of positions is closed under options.
If for any $G \in \mathcal{G}$,
either $G \in \mathbb{Z}$ is known or
$G$
has one of diamond properties for $\Z$ in Definition \ref{def:dia-Zs},
then $\mathcal{G} \subset \Z$.
\end{corollary}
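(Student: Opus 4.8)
The plan is to obtain this statement as the degenerate case of Corollary~\ref{co:variantZZ} in which the whole set plays the role of $\mathcal{D}$ and the auxiliary part $\mathcal{S}$ is empty. Concretely, I would put $\mathcal{D} = \mathcal{G}$ and $\mathcal{S} = \emptyset$, and then simply check the three hypotheses (a), (b), (c) of that corollary.

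Hypothesis (a) is immediate: it is literally the assumption of the present statement, since every $G \in \mathcal{D} = \mathcal{G}$ has one of the diamond properties of Definition~\ref{def:dia-Zs}. Hypothesis (b) is vacuous, as there is no $G \in \mathcal{S}$ when $\mathcal{S} = \emptyset$. Hypothesis (c) is the only one requiring an argument, and only a one-liner: for $G \in \mathcal{D} = \mathcal{G}$, each Left option $G^L$ lies in $\mathcal{G}$ because $\mathcal{G}$ is closed under options, and then each Right option $G^{LR}$ of $G^L$ again lies in $\mathcal{G}$ for the same reason; hence $G^{\mathcal{LR}} \subset \mathcal{G} = \mathcal{D}$, and symmetrically $G^{\mathcal{RL}} \subset \mathcal{D}$.

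With (a)--(c) verified, Corollary~\ref{co:variantZZ} gives in particular its conclusion (2), namely $G \in \mathcal{D} \imply G \in \Z$; since $\mathcal{D} = \mathcal{G}$, this is exactly $\mathcal{G} \subset \Z$. I do not expect any genuine obstacle, since the argument is a pure specialization; the only point to keep in mind is that the closure of $\mathcal{G}$ under options is invoked twice (to pass from $G$ to its options, and then to the options of those options) so that (c) really holds — but this is precisely the hypothesis we are given. (An alternative route would bypass Corollary~\ref{co:variantZZ} and argue directly from Corollary~\ref{coro:diamond4} together with the case analysis already carried out before Corollary~\ref{co:variantZZ}, but the specialization above is the shortest.)
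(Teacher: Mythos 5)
Your proposal is correct and matches the paper's own (one-line) proof: the paper obtains Corollary~\ref{co:variantZ} exactly as the special case $\mathcal{D}=\mathcal{G}$, $\mathcal{S}=\emptyset$ of Corollary~\ref{co:variantZZ}, with (b) vacuous and (c) following from closure of $\mathcal{G}$ under options applied twice, just as you argue. Your verification of hypothesis (c) is the only nontrivial check and it is done correctly.
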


Note that,
when restricted to the $\Diamond^\leq$-property
and combined with Proposition \ref{pro:empty-int},
the corollary above 
essentially coincides with a result in \cite{CHNS21}.

Next, we introduce diamond properties
that are instances of the $\Diamond_\D$-property,
although they are not instances of the $\Diamond_\Z$-property.

\begin{definition}\label{def:dia-Ds}
We say that a position $G$ has:
\begin{itemize}
\item
  the \emph{$\Diamond_L^{\leq}$-property}
  if 
  there exist $G^L \in \gd_{\D}^L(G)$, $G^R \in \gd_{\D}^R(G)$,
  and $G^{LR}$
  such that $G^{LR} \leq G^R$,
\item
  the \emph{$\Diamond_R^{\leq}$-property}
  if 
  there exist $G^L \in \gd_{\D}^L(G)$, $G^R \in \gd_{\D}^R(G)$,
  and $G^{RL}$
  such that $G^L \leq G^{RL}$, and
\item
  the \emph{$\triangle$-property}
  if 
  there exist $G^L \in \gd_{\D}^L(G)$, $G^R \in \gd_{\D}^R(G)$,
  such that $G^L \cglfuz G^R$.
\end{itemize}
\end{definition}

We display these diamond properties in Fig.\ref{fig:dia-Ds}.
Note that the disjunction of the $\Diamond_L^\leq$ and $\Diamond_R^\leq$-properties
corresponds to the $F_1$-property introduced in \cite{CHNS21}.

\begin{figure}[htb]
\centering
\begin{tikzpicture}
\node at (0,0) {
\begin{tikzpicture}[x=14pt,y=14pt]
\node at (2,1.9) {$\Diamond_L^\leq$};
\node (g) at (2,4) {$G$};
\node (gl) at (0,2) {$G^L$};
\node (gr) at (4,2) {$G^R$};
\node (glr) at (2,0) {$G^{LR}$};
\draw [->,thick] (g) to (gl);
\draw [->,thick] (g) to (gr);
\draw [->,thick,dotted] (gl) to (glr);
\node at (3,.9) {\rotatebox{45}{$\leq$}};
\end{tikzpicture}};
\node at (3.5,0) {
\begin{tikzpicture}[x=14pt,y=14pt]
\node at (2,1.9) {$\Diamond_R^\leq$};
\node (g) at (2,4) {$G$};
\node (gl) at (0,2) {$G^L$};
\node (gr) at (4,2) {$G^R$};
\node (grl) at (2,0) {$G^{RL}$};
\draw [->,thick] (g) to (gl);
\draw [->,thick] (g) to (gr);
\draw [->,thick,dotted] (gr) to (grl);
\node at (.8,.9) {\rotatebox{-45}{$\leq$}};
\end{tikzpicture}};
\node at (7,0) {
\begin{tikzpicture}[x=14pt,y=14pt]
\node at (2,2.6) {$\triangle$};
\node (g) at (2,4) {$G$};
\node (gl) at (0,1.5) {$G^L$};
\node (gr) at (4,1.5) {$G^R$};
\draw [->,thick] (g) to (gl);
\draw [->,thick] (g) to (gr);
\node at (2,1.5) {$\cglfuz$};
\end{tikzpicture}};
\end{tikzpicture}
\caption{Diamond properties in Definition \ref{def:dia-Ds}}
\label{fig:dia-Ds}
\end{figure}
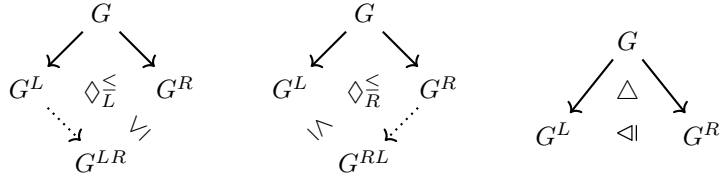

These three properties rely on the density of numbers.
They imply that $G^L \cglfuz G^R$.
Under the assumption that $G^L, G^R \in \D$,
we have $G^L < G^R$,
and hence, by the density of numbers,
there exists $x \in \D$ such that $G^L < x < G^R$.
On the other hand, 
these properties do not hold for integers,
because the density property failed for integers.
Moreover,
the requirement that $G^L, G^R \in \D$ even when $G \in \mathcal{D}$
implies that all positions in $\mathcal{G}$ must lie in $\mathcal{D}$.
Hence,
although we do not obtain the corollary
(similar to Corollary \ref{co:variantZZ})
of Theorem \ref{th:diamond3}
with respect to the diamond properties in Definition \ref{def:dia-Ds},
we do obtain the corollary stated in Corollary \ref{coro:diamond4}.

\begin{corollary}\label{co:variantD}
Suppose that a set $\mathcal{G}$ of positions is closed under options.
If for any $G \in \mathcal{G}$,
either $G \in \mathbb{D}$ is known or
$G$
has one of diamond properties for $\D$
in Definition \ref{def:dia-Zs} and Definition \ref{def:dia-Ds},
then $\mathcal{G} \subset \D$.
\end{corollary}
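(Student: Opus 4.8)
The plan is to prove directly, by induction on $G$ along the moving relation $\to$ (well founded since all positions here are short), that $G\in\D$ for every $G\in\mathcal{G}$. This is the assertion of Corollary~\ref{coro:diamond4} with $A=\D$, but it cannot merely be quoted: to verify that a position has the $\Diamond_\D$-property one needs its options to be numbers already, and that is exactly what the induction supplies, so the argument must interleave the two, much as in the proof of Theorem~\ref{th:diamond3}. Fix $G\in\mathcal{G}$. If $G\in\D$ we are done. Otherwise $G\notin\D$, hence also $G\notin\Z$ since $\Z\subset\D$; moreover, since $\mathcal{G}$ is closed under options, the induction hypothesis gives $G'\in\D$ for every proper $\to$-descendant $G'$ of $G$, so in particular $G^{\mathcal L},G^{\mathcal R}\subset\D$, and likewise $G^{\mathcal{LR}},G^{\mathcal{RL}}\subset\D$ and, in the $\Diamond$-case, the middle position $H$ lies in $\D$.

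By hypothesis $G$ has one of the seven properties of Definitions~\ref{def:dia-Zs} and~\ref{def:dia-Ds}; since $G\notin\D$ (and so $G\notin\Z$) the trivial alternative of each definition is excluded, so the required guide options and witness exist. The crux is the claim that, under the induction hypothesis just recorded, each of these seven properties forces $G$ to have the $\Diamond_\D$-property. For the four properties of Definition~\ref{def:dia-Zs} this is the content of the discussion immediately following that definition, read with $A=\D$: one takes $x$ to be the witnessing position ($x=H$, or $x=G^{LR}$, or $x=G^{RL}$), which lies in $\D$ by the induction hypothesis, and reads off $G^L\cglfuz x\cglfuz G^R$ from the displayed $\to$-edges together with the basic comparison of a position with its options and the propagation facts recalled in Section~2. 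For the three properties of Definition~\ref{def:dia-Ds}, each yields $G^L\cglfuz G^R$, and since $G^L,G^R\in\D$ this gives $G^L<G^R$; the density of $\D$ then furnishes $x\in\D$ with $G^L<x<G^R$, so again $G^L\cglfuz x\cglfuz G^R$. Once $G$ has the $\Diamond_\D$-property, Lemma~\ref{lem:diamond2} with $A=\D$ applies, because $G^{\mathcal L},G^{\mathcal R}\subset\D\subset\cgpos{\D}{\D}$; it yields $G\in\D$, contradicting $G\notin\D$. Hence the case $G\notin\D$ never occurs and the induction is complete.

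The step I expect to demand the most care is showing that the properties of Definition~\ref{def:dia-Zs} produce the $\Diamond_\D$-property and not merely the $\Diamond_\Z$-property, since they are phrased with the guide sets $\gd_\Z^L(G),\gd_\Z^R(G)$ whereas $\Diamond_\D$ calls for guide options in $\gd_\D^L(G),\gd_\D^R(G)$. The reconciliation again uses that, by the induction hypothesis, every option of $G$ is a number: in that case $\RS{\D}$ and $\LS{\D}$ of an option return its own value, the options on each side are totally ordered, and $\cglfuz$ between numbers is just $<$, so $\gd_\D^L(G)$ consists of the maximal Left option(s) and $\gd_\D^R(G)$ of the minimal Right option(s). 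One then checks that the strict inequalities obtained above persist after replacing $G^L,G^R$ by these extremal options --- equivalently, that $\max G^{\mathcal L}<\min G^{\mathcal R}$, which is precisely what allows Lemma~\ref{lem:diamond2}, through the Simplicity Theorem (Proposition~\ref{pro:simpl}), to conclude $G\in\D$. The remaining cases are immediate: if $G^{\mathcal L}$ or $G^{\mathcal R}$ is empty then $G$ is already a number (its options being numbers), and if $G\in\Z$ then $G\in\D$ trivially.
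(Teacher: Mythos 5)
Your overall route is the paper's intended one: induct along $\to$, use the induction hypothesis to place all options, second options and witnesses in $\D$, convert whichever of the seven properties $G$ enjoys into the $\Diamond_\D$-property, and finish with Lemma \ref{lem:diamond2}. You also correctly isolate the delicate point, namely that the properties of Definition \ref{def:dia-Zs} are phrased with $\gd_{\Z}$-guides while the $\Diamond_\D$-property demands $\gd_{\D}$-guides. But at exactly that point you only write that ``one then checks that $\max G^{\mathcal L}<\min G^{\mathcal R}$'', and this check cannot be carried out: a $\gd_{\Z}$-guide Left option maximizes the integer stop $\RS{\Z}$, not the value, so once non-integer numbers occur among the options it may well be a dominated option, and the sandwich $G^{L}\cglfuz x\cglfuz G^{R}$ for that guide says nothing about the value-maximal Left option. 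Concretely, let $p_{1/2}=\cgpos{0}{1}$, $p_{3/4}=\cgpos{p_{1/2}}{1}$, $p_{5/8}=\cgpos{p_{1/2}}{p_{3/4}}$, $H=\cgpos{p_{1/2}}{p_{5/8}}$ (value $9/16$), $G^{L}=\cgpos{0}{H}$ (value $1/2$), $G^{R}=\cgpos{H}{p_{3/4}}$ (value $5/8$), and $G=\cgpos{G^{L},\,p_{3/4}}{G^{R}}$. Every proper follower of $G$ is a number and so trivially has the properties of Definition \ref{def:dia-Ds}, while $G$ itself has the $\Diamond$-property exactly as defined: $\RS{\Z}(G^{L})=\RS{\Z}(p_{3/4})=1=\LS{\Z}(G)$ and $\LS{\Z}(G^{R})=1=\RS{\Z}(G)$, so $G^{L}\in\gd_{\Z}^{L}(G)$ and $G^{R}\in\gd_{\Z}^{R}(G)$, and $G^{L}\move{R}{}H\lmove{L}{}G^{R}$ with the same position $H$. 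Nevertheless $G=\cgpos{3/4}{5/8}$ is a switch, so $\max G^{\mathcal L}=3/4>5/8=\min G^{\mathcal R}$ and $G\notin\D$.

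So the step you defer is not a routine verification but the actual obstruction, and with the literal $\gd_{\Z}$-guide reading of Definition \ref{def:dia-Zs} it is false; your induction collapses at the first position whose only property comes from Definition \ref{def:dia-Zs} via a guide that is not value-extremal. The paper's own discussion glosses over the same point by asserting that those properties yield the $\Diamond_A$-property ``for each $A\in\{\Z,\D\}$'', i.e.\ it implicitly reads them with $\gd_{\D}$-guides when the target is $\D$. Under that reading your first two paragraphs already constitute a complete proof (the witness $x=H$, $G^{LR}$ or $G^{RL}$ lies in $\D$ by induction, is sandwiched between the $\gd_{\D}$-guides, and Lemma \ref{lem:diamond2} applies), and the third paragraph is unnecessary; without that reading, one must either forbid mixing the two families across positions (if all of $\mathcal{G}$ satisfies Definition \ref{def:dia-Zs}, Corollary \ref{co:variantZ} already gives $\mathcal{G}\subset\Z\subset\D$) or add the $\gd_{\D}$-guide requirement. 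As written, your argument has a genuine gap at the announced reconciliation step.
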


Note that,
when restricted to the $\Diamond^\leq$, $\Diamond_L^\leq$ and $\Diamond_R^\leq$-properties,
and combined with Proposition \ref{pro:empty-int},
the corollary above essentially coincides with a result in \cite{CHNS21}.

In the definitions of the various diamond properties,
we require the existence of guide options $G^L \in \gd_{\mathcal{A}}^L(G)$ and $G^R \in \gd_{\mathcal{A}}^R(G)$.
It follows from Proposition \ref{pro:empty-int}
that $G \in \Z \subset \D$ whenever $G^\mathcal{L} = \emptyset \lor G^\mathcal{R} = \emptyset$.
Hence, even if we modify the requirement so that ``all options'' satisfy the remaining conditions,
all the theorems and corollaries concerning the diamond properties will still hold.

\section{Case Study: \textsc{Yashima} Game on Bipartite Graphs}

\textsc{Yashima} game, proposed by Arisawa \cite{A90}, is played on a given undirected graph.
In this section, we study \textsc{Yashima} game on bipartite graphs as a combinatorial game
that satisfies the diamond property ($\Diamond$-property).
Using our results, we show that every game position in \textsc{Yashima} game on bipartite graphs takes a value of the very simple form $\cgpos{m}{n}\in \ZZ$.

To begin, recall that a graph is bipertite if and only if it is 2-colorable.
We classify game positions of \textsc{Yashima} game on  bipartite graphs as follows.
\begin{definition}
Let $G$ be a game position of \textsc{Yashima} game on  bipartite graphs.
\begin{itemize}
\item
  We call $G$ a \emph{different-color position}
  if there exists a 2-coloring in which the vertices of the Left and
	 Right tokens receive different colors.
\item
Otherwise, $G$ is called a \emph{same-color position}.
\end{itemize}
Let $\mathcal{D}$ (resp. $\mathcal{S}$) denote the set of all
different-color (resp. same-color) positions.
\end{definition}

With this partition, we obtain the following theorem.

\begin{theorem}\label{th:yashima-diamond}
\textsc{Yashima} game on bipartite graphs satisfies the $\Diamond$-property.
\end{theorem}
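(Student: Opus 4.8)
The plan is to fix the partition $\mathcal{G}=\mathcal{D}\cup\mathcal{S}$ and verify the three hypotheses (a), (b), (c) of Corollary~\ref{co:variantZZ} for the $\Diamond$-property of Definition~\ref{def:dia-Zs}; here $\mathcal{G}$ is closed under options because deleting an edge from a bipartite graph leaves it bipartite. Conditions (b) and (c) will rest on one observation: a $2$-coloring of the graph of a position stays valid under edge deletion, and each move flips the color of the token that moves. If $G\in\mathcal{S}$, take any $2$-coloring $\chi$ of the graph $\Gamma$ of $G$ (it exists since $\Gamma$ is bipartite); then $\chi$ assigns the two token vertices the same color, so after one legal move $\chi$ is still valid while the moved token has switched color, putting every option of $G$ in $\mathcal{D}$ — this gives (b). If $G\in\mathcal{D}$, pick $\chi$ that separates the two token vertices by color; the two moves producing any position in $G^{\mathcal{LR}}$ or $G^{\mathcal{RL}}$ flip both tokens, which therefore remain separated by $\chi$, giving (c). In each case the only thing to check is that the moves invoked are legal, i.e.\ the target vertex is unoccupied and (for (c)) the two deleted edges are distinct, which is immediate from the rule forbidding two tokens on one vertex together with $a\neq b$ for the token vertices $a$ (Left) and $b$ (Right) of $G$.

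The substance is condition (a): every $G\in\mathcal{D}$ has the $\Diamond$-property. If $G\in\Z$ we are done, so assume $G\notin\Z$; then $G^{\mathcal{L}}$ and $G^{\mathcal{R}}$ are both nonempty, since a position with no option on one side is an integer, and hence $G$ has guide options on both sides. Write $\Gamma$ for the graph of $G$ and $a,b$ for the Left and Right token vertices. The key elementary claim will be: \emph{there exist a guide Left option and a guide Right option whose target vertices are distinct.} Granting it, let $G^L\in\gd_{\Z}^L(G)$ arise from the move $a\to a'$ (deleting $aa'$) and $G^R\in\gd_{\Z}^R(G)$ from $b\to b'$ (deleting $bb'$) with $a'\neq b'$. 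Since $a\neq b$, $a'\neq b$ and $b'\neq a$, the edges $aa'$ and $bb'$ are distinct, so from $G^L$ Right may still play $b\to b'$ and from $G^R$ Left may still play $a\to a'$, and both of these reach the same position $H$ with tokens at $a'$ and $b'$ on $\Gamma-\{aa',bb'\}$. Thus $G^L\move{R}{}H\lmove{L}{}G^R$, so $G$ has the $\Diamond$-property.

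It then remains to prove the claim. Suppose it fails. Then every target of a guide Left option equals every target of a guide Right option, which forces all guide options of $G$, Left and Right alike, to move to a single common vertex $c$ (otherwise two guide options on one side, or a guide Left and a guide Right option, would already have distinct targets). But a guide Left option comes from $a\to c$ and a guide Right option from $b\to c$, so $ac$ and $bc$ are both edges of $\Gamma$; hence $c$ is adjacent in $\Gamma$ to both $a$ and $b$, and so in every $2$-coloring of $\Gamma$ the vertices $a$ and $b$ receive the color opposite to that of $c$, i.e.\ the same color. This makes $G$ a same-color position, contradicting $G\in\mathcal{D}$. This establishes (a), and together with (b) and (c) it supplies exactly the hypotheses of Corollary~\ref{co:variantZZ}.

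I expect the delicate point to be precisely this last step: deducing from ``no two guide options with distinct targets'' that $a$ and $b$ have a common neighbor \emph{in $\Gamma$ itself}, so that the $2$-colorability of $G$'s own graph — not merely of some option's graph — contradicts membership in $\mathcal{D}$. The remaining work, namely confirming that the cited moves stay legal after the relevant edge deletions and that a guide option can always be chosen when the side is nonempty, is routine bookkeeping.
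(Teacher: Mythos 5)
Your proof is correct and follows essentially the same route as the paper: the decisive observation in both is that a coinciding (blocked) target vertex would be a common neighbour of the two token vertices, forcing every $2$-coloring to give them the same color and contradicting $G\in\mathcal{D}$. The paper phrases this directly as the impossibility of blocking, so that \emph{every} pair $G^L, G^R$ commutes to a common follower $H\in\mathcal{D}$, while you argue by contradiction for a chosen guide pair and additionally verify conditions (b) and (c) of Corollary~\ref{co:variantZZ}, which the paper treats in the remarks after the theorem; these are presentational differences only.
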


\begin{proof}
Let $G \in \mathcal{D}$ and suppose $G^L \lmove{L}{} G \move{R}{} G^R$.
We claim that the Left move used in $G \move{L}{} G^L$ can also be played from $G^R$,
yielding $G^R \move{L}{} G^{RL}$;
symmetrically, the Right move used in $G \move{R}{} G^R$ can also be played from $G^L$,
yielding $G^L \move{R}{} G^{LR}$.
Once this is verified, it follows that $G^{RL}\cong G^{LR}$,
and this common game position belongs to $\mathcal{D}$.
Hence $G$ has the $\Diamond$-property.

In \textsc{Yashima} game, a planned move can be blocked by the opponent
only in the following situation:
\begin{center}
\begin{tikzpicture}[x=14pt,y=14pt,thick]
\node (l) at (1,1) [draw,circle,inner sep=1pt] {L};
\node (p) at (3,1) [draw,circle,inner sep=4pt] {};
\node (r) at (5,1) [draw,circle,inner sep=1pt] {R};
\draw (l) -- (p) -- (r);
\draw [dotted] (-.8,.5) to (l);
\draw [dotted] (-.8,1.5) to (l);
\draw [dotted] (6.8,.5) to (r);
\draw [dotted] (6.8,1.5) to (r);
\end{tikzpicture}
\end{center}
That is, the opponent moves onto exactly the vertex  (the ``central'' vertex)
to which the player intended to move.
Such blocking is possible only in a same-color position.
Since $G \in \mathcal{D}$, 
no such blocking move is possible for either player.
\QED
\end{proof}

For the different-color positions $\mathcal{D}$
and the same-color positions $\mathcal{S}$,
conditions (b) and (c) of Corollary \ref{co:variantZZ}
follow directly from the rules of \textsc{Yashima} game.
Therefore, we obtain the following corollary.

\begin{corollary}\label{co:yashima}
Let $G$ be a game position of \textsc{Yashima} game on  bipartite graphs.
Then the following properties hold:
\begin{enumerate}
\item[(1)]
  $G\in \cgpos {\Z} {\Z}$.
\item[(2)]
  If $G$ is a different-color position, then $G\in\Z$.
\end{enumerate}
\end{corollary}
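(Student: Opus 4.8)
The plan is to derive the statement directly from Corollary~\ref{co:variantZZ}, taking $\mathcal{G}$ to be the set of all positions of \textsc{Yashima} game on bipartite graphs, partitioned into the different-color positions $\mathcal{D}$ and the same-color positions $\mathcal{S}$. First I would record the routine preliminaries: a legal move deletes one edge and leaves the vertex set unchanged, so the new graph is again bipartite and the new position again lies in $\mathcal{G}$; hence $\mathcal{G}$ is closed under options, and since each move strictly decreases the number of edges the relation $\to$ is well-founded. Thus Corollary~\ref{co:variantZZ} applies as soon as its hypotheses (a)--(c) are verified.

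Hypothesis (a) is precisely Theorem~\ref{th:yashima-diamond}: every $G\in\mathcal{D}$ has the $\Diamond$-property, which is one of the diamond properties listed in Definition~\ref{def:dia-Zs}. Here one uses the ``all Left options, all Right options'' form in which that theorem is actually proved, which is legitimate by the remark at the end of Section~4. So for (a) I would simply invoke that theorem.

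The substance lies in (b) and (c), and for both I would fix once and for all a proper $2$-coloring $c$ of the underlying bipartite graph, noting that $c$ remains proper after any sequence of edge deletions, so every move of \textsc{Yashima} game carries a token along an edge to a vertex of the opposite $c$-color, i.e.\ flips that token's $c$-color. For (b): if $G\in\mathcal{S}$, then the two tokens of $G$ get the same color under \emph{every} $2$-coloring, in particular under $c$; a single move (by either player) flips exactly one of the two $c$-colors, so in the resulting position the tokens carry distinct $c$-colors and the position is a different-color position. Hence $G^\mathcal{L},G^\mathcal{R}\subset\mathcal{D}$. For (c): if $G\in\mathcal{D}$, fix a $2$-coloring $c'$ witnessing that the tokens of $G$ differ; in any position of the form $G^{LR}$ or $G^{RL}$ each token has been moved exactly once, so each has flipped its $c'$-color, and the two tokens still differ under $c'$. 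Hence $G^\mathcal{LR},G^\mathcal{RL}\subset\mathcal{D}$. With (a)--(c) established, Corollary~\ref{co:variantZZ}(1) gives $G\in\cgpos{\Z}{\Z}$ for every $G\in\mathcal{G}$ and Corollary~\ref{co:variantZZ}(2) gives $G\in\Z$ for every $G\in\mathcal{D}$, i.e.\ for every different-color position, which is exactly what is claimed.

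The main obstacle I anticipate is the interplay between the existential quantifier in the definition of different/same-color positions and the color-parity argument, especially when the underlying graph is disconnected: on the $\mathcal{S}$-side one must use that ``same-color'' means ``same color in every $2$-coloring'', while on the $\mathcal{D}$-side it suffices to track a single witnessing coloring. Once this bookkeeping is handled, everything else follows immediately from the rules of the game, so I do not expect any further difficulty.
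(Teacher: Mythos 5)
Your proposal is correct and follows essentially the same route as the paper: take $\mathcal{G}$ to be all bipartite \textsc{Yashima} positions partitioned into $\mathcal{D}$ and $\mathcal{S}$, get hypothesis (a) of Corollary~\ref{co:variantZZ} from Theorem~\ref{th:yashima-diamond}, and verify (b) and (c) from the rules of the game. The only difference is that you spell out the color-parity argument for (b) and (c) (which the paper leaves as ``follows directly from the rules''), and your handling of the existential/universal quantifier over $2$-colorings there is sound.
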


A game with rules similar to \textsc{Yashima} game is known as \textsc{Tron} \cite{HT90}.
In \textsc{Tron}, after moving a token,
one deletes the vertex just left,
rather than deleting an edge as in \textsc{Yashima} game.
Theorem \ref{th:yashima-diamond} and Corollary \ref{co:yashima} likewise hold for \textsc{Tron}
on bipartite graphs.

Finally, Fig.\ref{fig:yashima2} illustrates
a concrete example of a game position of \textsc{Yashima} game
on a bipartite graph.
The size of the fully expanded game tree for this game position reaches
104{,}241,
so the number of follower game positions is enormous.
For the analysis, one must also compute the canonical form
of this game position.
By using memoization,
we can reduce the search space to 1{,}307
(the exact number depends on the implementation).
Nevertheless, the computation typically require computer assistance.
In contrast,
the value $\cgpos{0}{{-}3}$ of this game position is remarkably simple.
Note that
the value of the same game position,
when regarded as a \textsc{Tron} game,
is $\cgpos{1}{{-}1}$, which is also simple.

\begin{figure}[htb]
  \centering
\begin{tikzpicture}[x=20pt,y=20pt,thick]
\node (p01) at (0,2) [draw,circle,inner sep=2pt] {L};
\node (p11) at (2,2) [draw,circle,inner sep=5pt] {};
\node (p21) at (4,2) [draw,circle,inner sep=5pt] {};
\node (p31) at (6,2) [draw,circle,inner sep=5pt] {};
\node (p41) at (8,2) [draw,circle,inner sep=2pt] {R};
\node (p00) at (0,0) [draw,circle,inner sep=5pt] {};
\node (p10) at (2,0) [draw,circle,inner sep=5pt] {};
\node (p20) at (4,0) [draw,circle,inner sep=5pt] {};
\node (p30) at (6,0) [draw,circle,inner sep=5pt] {};
\node (p40) at (8,0) [draw,circle,inner sep=5pt] {};
\draw (p01) -- (p11) -- (p21) -- (p31) -- (p41);
\draw (p00) -- (p10) -- (p20) -- (p30) -- (p40);
\draw (p00) -- (p01);
\draw (p10) -- (p11);
\draw (p20) -- (p21);
\draw [double distance=2pt] (p30) -- (p31);
\draw (p40) -- (p41);
\end{tikzpicture}
  \caption{\textsc{Yashima} game (position value: $\{0 \mid -3\}$)}
  \label{fig:yashima2}
\end{figure}

\section*{Acknowledgements}
This work was partially supported by
JSPS KAKENHI Grant Number JP22K13953. 




\end{document}